\newcounter{excnt}
\newtheorem{thm}{Theorem}
\newtheorem{df}[thm]{Definition}
\newtheorem{rem}[thm]{Remark}
\newtheorem{cor}[thm]{Corollary}
\newtheorem{lem}[thm]{Lemma}
\newtheorem{satz}[thm]{Proposition}
\newtheorem{conj}{Conjecture}
\newenvironment{proof}%
     {\noindent {\bf Proof:}}{\nopagebreak\begin{flushright}{\bf q.e.d.}\end{flushright}}
     {\stepcounter{excnt}
      \noindent {\bf Example \arabic{excnt}} \phantom{i}}{\vspace{0.4cm}}
\DeclareMathOperator*{\lcm}{lcm}
\DeclareMathOperator*{\ord}{ord}
\title{Zeta Functions for Families of Calabi--Yau n-folds with Singularities}
\author{Anne Fr\"uhbis--Kr\"uger, Shabnam Kadir\\
        Institut f. Alg. Geometrie, Leibniz Universt\"at Hannover, Germany}
\begin{document}

\maketitle

\abstract{We consider families of Calabi--Yau $n$-folds containing singular
fibres and study relations between the occurring singularity structure and the
decomposition of the local Weil zeta-function. For $1$-parameter families,
this provides new insights into the combinatorial structure of the strong
equivalence classes arising in the Candelas--de la Ossa--Rodrigues-Villegas 
approach for computing the zeta-function. This can also be extended to 
families with more parameters as is explored in several examples, where 
the singularity analysis provides correct predictions for the changes of
degree in the decomposition of the zeta-function when passing to singular 
fibres. These observations provide first evidence in higher dimensions for
Lauder's conjectured analogue of the Clemens--Schmid exact sequence.}

\section{Introduction}

After a decade and a half of string theorists studying Calabi--Yau manifolds over fields of characteristic zero, particularly
in the context of mirror symmetry, Candelas, de la Ossa and Rodrigues-Villegas
\cite{CdOV1} began the exploration of arithmetic mirror symmetry. Calabi--Yau 
manifolds over finite characteristic thus became objects of interest to 
physicists as well as mathematicians.
After the discovery that the moduli spaces of all known Calabi--Yau 
manifolds form a web linked via conifold transitions \cite{GH}, the interest 
on the part of physicists decreased significantly concerning more 
complicated singularities which occur at other interesting points in the 
complex structure moduli space. However, newer results such as \cite{KLS} 
suggest that it might be worthwhile to reconsider this and ask questions 
such as: 
Is string theory viable on spaces with singularities with high Milnor 
numbers and even non-isolated singularities? Can the D-brane interpretation 
of conifold (i.e. ordinary double points) transitions by Greene, Strominger 
and Morrison \cite{S, GMS} be extended to what would be more complicated 
phase transitions? Questions of this type have not been considered very 
deeply yet - in part, because the study of singularities with more structure 
requires different methods. In this article, we want to start an approach in 
this direction by specifically studying properties at the singular fibres of 
families of Calabi--Yau varieties.
In \cite{KLS} the first question was addressed by finding points in the 
moduli space where the singular Calabi--Yau manifolds exhibited modularity, 
(i.e. are their cohomological $L$-series completely determined 
by certain modular cusp forms) as a consequence of the rank of certain 
motives decreasing in size at singularities. For an overview of Calabi--Yau 
modularity the reader may consult \cite{HKS}. Our approach, which enables the 
specification of exactly how much the degree of the contribution to the 
zeta function associated to each strong orbit (which in turn is directly 
related to motive rank) decreases, would further aid such investigations.\\ 

The local Weil zeta-function for certain families of 
Calabi--Yau varieties of various dimensions decomposes into pieces 
para\-me\-trized by monomials which are related to the toric data of the 
Calabi--Yau varieties \cite{CdOV1, CdOV2, CdO, K, Ka06}.  It was shown in these
papers that this decomposition points to deeper structures, since these 
monomials can also be related to the periods which satisfy Picard-Fuchs 
equations. Away from the singular fibres, this phenomenon of a link to 
$p$-adic periods was explained for one-parameter 
families using Monsky--Washnitzer cohomology in \cite{Kl}. The families 
considered there all have the property that one distinguished member of each 
family is a diagonal variety of Fermat type; these are very accessible to
explicit computations and are known to possess decompositions in terms of 
Fermat motives \cite{GY,KY}. \\ 

At certain values of the parameter, the corresponding variety becomes 
singular, and it was observed in \cite{K, Ka06} that the degree of the 
contribution to each piece decreases according to the types of singularities 
encountered in explicit examples. In order to test, whether the observations 
in \cite{K, Ka06} concerning the degenerations of the zeta functions for 
singular Calabi--Yau varieties hold more generally, we analyse the 
discriminant locus and singularity structure for general 1-parameter  
and some explicit 2-parameter families of Calabi--Yau varieties with 
distinguished fibre of Fermat-type and compare the results to the structure 
of their zeta functions. In particular, this provides strong evidence for 
conjectures connecting the numbers and types of singularities in the 
discriminant locus with certain combinatorial arguments arising in motivic and 
zeta function considerations and proves the facts for the considered cases by
a direct comparison. In all cases with isolated singularities the total 
Milnor number of the singularities is given precisely by the degeneration 
in the degree of the various parts of the zeta function. Observations
on finer combinatorial properties of the decomposition are also possible; 
for the $1$-parameter families, the decomposition of the singular locus and 
the Milnor numbers of the types of singularities occuring are reflected in 
the analysis of the structure of this degeneration. For these considerations,
the choice of using Dwork's original approach for computing the zeta-function 
was influenced by two constraints: by the presence of isolated singularities 
in the cases of interest and by the goal to also study higher-dimensional 
examples, which basically rules out explicit resolution of singularities 
in many cases due to the intrinsic complexity of the algorithm.   \\

After fixing notation and stating references for standard facts about the
local Weil zeta function at good primes in Section 2, we first analyze the 
occurring singularities in detail in Section 3. There we focus on 
combinatorial aspects in the calculations, which by themselves do not seem 
very exciting at first glance, but reoccur from a different perspective in 
the computation of the zeta functions for the corresponding singular fibres 
in the subsequent section. This correspondence is then explored further in 
Section \ref{twoparameter} for explicit examples of $2$-parameter families 
and leads 
to the conjectures at the end of the article linking the singularity 
structure and the decomposition of the zeta function. If these conjectures 
hold, then a singularity analysis in the singular fibres coupled with a 
calculation of the zeta function away from the singular fibres already 
provides a large amount of vital information on the zeta function at the 
singularities by using well established standard methods of singularity 
theory and of point counting.\\ 

The authors would like to thank the members of the Institut f\"ur
Algebraische Geometrie and the Graduiertenkolleg `Analysis, Geometrie und
Stringtheorie' for the good working atmosphere and the insightful discussions.
All computations of the singularity analysis were done in {\sc Singular}
\cite{Sing}, for the zeta-function calculations Mathematica \cite{Mth} 
was used.

\section{Facts about the Weil zeta function}

A pair of reflexive polyhedra $(\Delta,\Delta^{\ast})$ is known to give
rise to a pair of mirror Calabi--Yau families 
$(\hat{V}_{f,\Delta},\hat{V}_{f,\Delta^{\ast}})$.
In this setting, Batyrev proved that topological invariants such as the 
Hodge numbers could be written in terms of the toric combinatorial data 
given by the reflexive polytopes. For the case of families of Calabi--Yau 
varieties which are deformations of a Fermat variety, the data of the 
reflexive polytope is encoded in certain monomials. For a detailed treatment 
of toric constructions of mirror symmetric Calabi--Yau manifolds see 
\cite{Bat, CK}.

First we recall a few standard definitions: 
the arithmetic structure of Calabi--Yau varieties can be encoded
in the congruent or local zeta function. The Weil Conjectures (proven 
by Deligne \cite{Del} in 1974) show that the local zeta function is a
rational function determined by the cohomology of the variety.

\begin{df}[Local zeta function]
The local zeta function for a smooth projective variety $X$ defined over 
$\mathbb{F}_p$ is defined as follows:
\begin{equation}
 \zeta(X/\mathbb{F}_p,t):= \exp\left(\sum_{r \in
   \mathbb{N}} \#(X/\mathbb{F}_{p^r})\frac{t^r}{r}\right),
\end{equation}
where $\#(X/\mathbb{F}_{p^r})$ is the number of rational points of the variety.
\end{df}

For families of Calabi--Yau manifolds in weighted projective space the local 
zeta function can be computed in various ways, we however shall utilise exclusively, methods 
first developed by Dwork in his proof of the rationality part of the Weil 
conjectures \cite{Dwork1, Dwork2}. We thus use Gauss sums  composed of the 
additive Dwork character, $\Theta$ and the multiplicative Teichm\"{u}ller 
character, $\omega^n(x)$:
\begin{equation} G_n = \sum_{x\in \mathbb{F}_p^\ast}
\Theta(x)\omega^n(x). \label{defgauss}
\end{equation} 

When a variety is defined as the vanishing locus of a polynomial $P \in
k[X_1,\ldots,X_n]$, where $k$ is a field, a non-trivial additive
character like Dwork's character can be exploited to count points
over $k$. Since $\Theta(x)$ is a character:
\begin{equation}
\sum_{y\in k}\Theta(yP(x))=
\begin{cases}
            0      \quad\quad\quad\quad\quad\quad \mathrm{if}\,P(x)\neq0,\\
q:=\mathrm{Card}(k)\quad \mathrm{if}\,P(x)=0\;;
\end{cases}
\end{equation}
hence
\begin{equation}
\sum_{x_i\in k}\sum_{y\in k}\Theta(yP(x))=q\#(X/\mathbb{F}_{p^r})\;,
\end{equation}\\
The above equation can be expressed in terms of Gauss sums which are 
amenable to computation via the Gross-Koblitz formula \cite{GK}. All zeta function 
computations in this paper use an implementation of this method on 
Mathematica developed in \cite{K, Ka06}. In our context, the choice of 
this method was mainly influenced by the fact that it is also suitable
for treating singular Calabi--Yau varieties, whereas most other approaches 
are restricted to the non-singular case. Lauder's extension of the 
deformation method \cite{L2} to the singular case relies on the 
existence of an analogue of the Clemens-Schmid exact sequence in
positive characteristic which is currently only conjectural. 

The decomposition of the number of points and hence the zeta function 
into parts labelled by strong $\beta$-classes, $\mathcal{C}_{\beta}$, 
was shown in \cite{CdOV1, CdOV2, K, Ka06} as a direct consequence of 
these methods \footnote{These papers do not explicitly refer to`strong equivalence classes',
the term was coined later by Kloosterman in \cite{Kl}}.
$$\zeta(t,a)=\zeta_{\mathbf const}(t)
                \prod_{\mathcal{C}_{\beta}}
                       \zeta_{\mathcal{C}_{\beta}}(t,a)$$
where $\zeta_{\mathbf const}(t)$ is a simple term, which is independent of 
the parameter $a$, and the $\beta$-classes are defined as follows:

\begin{df}[Strong motivic $\beta$-equivalence classes]
For a given set of weights, ${\mathbf w}=(w_1,\ldots,w_n)$, 
$d=\sum_{i} w_i$, $w_i | d\ \forall i$,  
identify the set of all monomials with the set of all exponents 
of the monomials. We now consider a subset thereof defined as
$$\mathcal{M}:= \mathcal{M}({\mathbf w}):=
        \left\{{\mathbf x}=(x_1,\ldots,x_i,\ldots,x_n)
                          \in \prod_{i=1}^n w_i{\mathbb Z}/d{\mathbb Z}\ 
        \mid {\mathbf x}\cdot{\mathbf w}=ld ,\ l\in{\mathbb Z} \right\}.$$
It is easy to see that $0\leq l \leq n-1$.\\ 
Let $l({\mathbf x}):={\mathbf x}\cdot{\mathbf w}/d$. Given a 
$\beta \in {\mathcal M}$ with $l(\beta)=1$, we can quotient out 
the set $\mathcal{M}$ with the equivalence relation $\sim_{\beta}$ 
on monomials, where
$${\mathbf x}\sim_{\beta}{\mathbf y}\Leftrightarrow {\mathbf y}
    ={\mathbf x}+t\beta,\ t\in{\mathbb Z},$$
From now on we shall assume (unless otherwise stated) that the $i$th 
exponent of each monomial is taken $\mod \frac{d}{w_i}$.
The equivalence classes, $\mathcal{C}_{\beta}$, thus obtained shall 
be referred to as the {\bf strong} $\beta$ {\bf-equivalence classes}.
\label{strong}
\end{df}

\begin{rem}
For families of Calabi--Yau varieties which are deformations of smooth 
varieties of Fermat type the toric data is equivalent to specifying 
the monomials ${\mathbf x}\in\mathcal{M}$ for which $l({\mathbf x})=1$, 
see \cite{CK}.
\end{rem}

\begin{satz}[\cite{Kl}]
Considering smooth fibres of a $1$-parameter family of Calabi--Yau varieties,
the factor of the local zeta function associated to a $\beta$-class and
the parameter value $a$ is at worst a fractional power 
$$\zeta_{\mathcal{C}_{\beta}}(t,a)=
  \left(\frac{P(t,a)}{Q(t,a)}\right)^{\frac{r}{s}},$$
where $P(t,a)$ and $Q(t,a)$ are polynomials and $r,s\in \mathbf{Z}$.
We define 
$$deg(\zeta_{\mathcal{C}_{\beta}}(t,a))
     :=(deg(P)-deg(Q))\frac{a}{b}.$$
This degree of the factor of the zeta function associated to each 
strong ${\beta}$-class, can be computed as the number of monomials 
in the class which do not contain $\left(\frac{d}{w_i}-1\right)$ in its $i$th component. 
\end{satz}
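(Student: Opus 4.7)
The plan is to start from Dwork's character-sum formula for the point counts. After the standard trick of introducing the additive character $\Theta$ and an auxiliary variable, $\#(X/\mathbb{F}_{p^r})$ can be rewritten, via the orthogonality of multiplicative characters, as a sum indexed by the lattice of monomials $\mathcal{M}$, in which each term is a product of Gauss sums $G_{n_i}$ multiplied by a monomial in the parameter $a$. Crucially, the dependence on $a$ enters only through the deformation monomial $\beta$, so monomials that differ by an integer multiple of $\beta$ contribute to the same power of $a$. Regrouping by the equivalence relation $\sim_\beta$ yields the product $\zeta(t,a)=\zeta_{\text{const}}(t)\prod_{\mathcal{C}_\beta}\zeta_{\mathcal{C}_\beta}(t,a)$, with $\zeta_{\text{const}}$ collecting the monomials in the trivial orbit ($l(\mathbf{x})=0$ or the orbit containing only constant-in-$a$ terms).

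Next, for a fixed class $\mathcal{C}_\beta$, I would apply the Gross--Koblitz formula to express each Gauss sum as a product of $p$-adic $\Gamma$-values at Teichm\"uller lifts. Exponentiating the resulting $r$-dependent expression via the definition of $\zeta$ and summing as a geometric-like series produces a rational function in $t$ whose numerator and denominator factor over $\overline{\mathbb{Q}_p}$, each root being a Gauss-sum product. Extracting a possible fractional power comes from the fact that the class may pick up a rational multiplicity from repeated Frobenius orbits; this establishes the form $\bigl(P(t,a)/Q(t,a)\bigr)^{r/s}$.

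For the degree statement, the plan is to isolate which monomials in $\mathcal{C}_\beta$ produce a genuine polynomial factor and which collapse to a pure $p$-power. A monomial $\mathbf{x}$ with $i$-th component equal to $d/w_i - 1$ (the ``maximal'' residue) forces the corresponding Gauss sum in the Gross--Koblitz product to fall into the reflection $\Gamma_p(x)\Gamma_p(1-x) = \pm 1$, so the Frobenius eigenvalue attached to $\mathbf{x}$ is of the form $\pm p^k$. Such eigenvalues cancel in the numerator/denominator accounting (they match corresponding contributions from $\zeta_{\text{const}}$ or from the boundary strata of the toric compactification) and hence contribute $0$ to $\deg P - \deg Q$. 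The remaining monomials each contribute a nontrivial factor of degree one, and no further cancellation occurs because distinct monomials give Frobenius eigenvalues of distinct $p$-adic valuations (which can be read off from the Hodge--Newton polygon encoded by the $\Gamma_p$-exponents). Counting what is left yields the stated formula.

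The main obstacle is the last point: verifying that, after the systematic cancellation of the ``maximal-component'' monomials, the surviving contributions are genuinely independent, with no hidden relations among the Gauss-sum products that would further reduce the degree. This requires a careful comparison of the $p$-adic valuations of the Gross--Koblitz factors, and it is also the place where one must be cautious about orbits under Frobenius, since these determine whether the polynomial factor appears integrally or only as a fractional power. The other technical nuisance is showing that the counted monomials sit only in numerator or denominator (not in cancelling pairs), which I would handle by a parity/weight argument on $l(\mathbf{x})$.
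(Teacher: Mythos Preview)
The paper does not prove this proposition at all: it is quoted as a result of Kloosterman \cite{Kl}, and the only comment the paper makes about its proof is the sentence immediately following the statement, namely that ``Kloosterman's explanation of the above-stated relation using Monsky--Washnitzer cohomology breaks down when the variety in question is singular.'' So the benchmark you should be comparing against is Kloosterman's cohomological argument, not anything in the present paper.

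Your outline is in the spirit of the Candelas--de la Ossa--Rodriguez-Villegas / Kadir computations rather than Kloosterman's proof, and that difference is exactly where your sketch runs into trouble. In Kloosterman's approach one writes down the relative Monsky--Washnitzer (or Dwork) cohomology of the family; for a monomial deformation of a Fermat hypersurface this cohomology has an explicit monomial basis, consisting precisely of the elements of $\mathcal{M}$ with no coordinate equal to $\frac{d}{w_i}-1$, and Frobenius respects the $\beta$-class decomposition. The degree of $\zeta_{\mathcal{C}_\beta}$ then drops out as the dimension of the corresponding graded piece, with no cancellation argument needed. What the cohomological framework buys is exactly the ``no hidden relations'' statement that you flag as your main obstacle.

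By contrast, your proposed mechanism for discarding the maximal-component monomials is not correct as stated. Having $x_i=\frac{d}{w_i}-1$ does not by itself force the associated Gauss-sum product into the reflection identity $\Gamma_p(x)\Gamma_p(1-x)=\pm 1$; that identity involves a \emph{pair} of complementary arguments, not a single maximal coordinate. What actually happens is that such monomials lie in the image of the reduction (Griffiths--Dwork) operators and hence do not survive to cohomology; seeing this purely at the level of Gauss sums is possible but requires tracking the full Dwork trace formula and the boundary contributions from the ambient toric variety, which your sketch does not do. Likewise, the claim that the remaining eigenvalues have pairwise distinct $p$-adic valuations is false in general (there are many repeated slopes on the Newton polygon), so valuation alone cannot rule out further cancellation. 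Without the cohomological dimension count, the degree statement remains unproved in your approach.
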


Kloosterman's explanation of the above-stated relation  using 
Monsky-Washnitzer cohomology breaks down when the variety in question 
is singular. A key aim of this article is to explore the degenerations 
of the various pieces of the zeta function for singular fibres. More 
sophisticated theoretical tools such as limiting mixed Frobenius 
structures in rigid cohomology will be needed to explain the degenerations. 
Lauder \cite{L2} provides a preliminary exploration of this through the 
introduction of a conjectured analogue of the Clemens-Schmid exact sequence, 
but his testing ground for the conjecture mostly consists of families of 
curves.\\

In this article we are able to supplement Lauder's examples through looking 
at singularities of higher-dimensional varieties, not just curves, as curves
are prone to oversimplification due to their low dimensionality and could 
thus be misleading. All our results for $1$-parameter families are applicable 
in all dimensions. Moreover, all arguments are explicit and no step requires 
desingularization, which would effectively have blocked the simultaneous 
view in all dimensions. In this article we intentionally only provide 
phenomenological (and for $2$-parameter families also experimental) data, but
no theoretical explanation for the observed correspondences, because we see
it merely as the first step in this direction. We wish to disseminate the 
observations as soon as possible and would prefer to devote another article 
to the theoretical side in due time. 

\section{Singularity analysis for some families of 
         Fermat-type Calabi--Yau n-folds} \label{SingAnalysis}

In this section, we collect data about the discriminant and the singularities 
of the fibres. To this end, we first consider general $1$-parameter families
in detail and then proceed to general observations on $2$-parameter families
which establish the background for the explicit examples in Section \ref{twoparameter}.

\subsection{$1$-parameter families}

For the $1$-parameter families, we can explicitly specify a Gr\"obner 
Bases for the relative Tjurina ideal w.r.t. a lexicographical ordering, 
where the parameter $a$ of the family is considered smaller than any of 
the variables. As a consequence, we can specify the discriminant of the 
family, count the number of singularities in each fibre over the base 
space and determine the Milnor numbers of the occurring singularities. 
A priori this is not very interesting, but later on it will turn out 
that the same kind of combinatorial data which arise here also appear 
in the computation of the Weil zeta function at singular fibres of the
family. Moreover, we shall consider $2$-parameter families later on, which 
specialize to such $1$-parameter families, if one parameter is set to zero. 
For these considerations, we shall make use of the explicit calculations 
of this subsection. 

Before stating the result explicitly, we need to recall one small 
observation which will yield a key argument in the proof:

\begin{lem} \label{bezout}
Consider a polynomial ring $R[x]$ over some (noetherian commutative) ring 
$R$ (with unit). Let $f=A x^\alpha - C$, $g=Bx^\beta - D$ for some 
$A,B,C,D \in R$. Then the ideal 
$\langle f,g \rangle$ contains polynomials which we can symbolically write
as 
\begin{eqnarray*}
A^rD^sx^{\gcd(\alpha,\beta)}& - & C^rB^s, \\
C^{\frac{\beta}{\gcd(\alpha,\beta)}-r}B^{\frac{\alpha}{\gcd(\alpha,\beta)}-s}
      x^{\gcd(\alpha,\beta)} & - & A^{\frac{\beta}{\gcd(\alpha,\beta)}-r}
      D^{\frac{\alpha}{\gcd(\alpha,\beta)}-s}\\
  A^{\frac{\beta}{\gcd(\alpha,\beta)}}D^{\frac{\alpha}{\gcd(\alpha,\beta)}} 
      & - & 
      C^{\frac{\beta}{\gcd(\alpha,\beta)}}
      B^{\frac{\alpha}{\gcd(\alpha,\beta)}}
\end{eqnarray*} 
where $r,s$ are integers arising from the B\'ezout identity 
$r\alpha - s\beta = \gcd(\alpha,\beta)$; to avoid ambiguities, we choose 
precisely the ones arising from the extended Euclidean algorithm as either
$r$ and $s$ or as $\frac{\beta}{\gcd(\alpha,\beta)}-r$ and
$\frac{\alpha}{\gcd(\alpha,\beta)}-s$ making sure that $r$ and $s$ are
both positive integers. 
\end{lem}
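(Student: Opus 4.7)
The plan is to treat the two relations $f=Ax^{\alpha}-C$ and $g=Bx^{\beta}-D$ as congruences modulo $\langle f,g\rangle$, namely $Ax^{\alpha}\equiv C$ and $Bx^{\beta}\equiv D$, and then to eliminate the high powers of $x$ using the Bézout relation $r\alpha-s\beta=\gcd(\alpha,\beta)=:d$.

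For the first displayed polynomial, I would start by raising the two basic relations to suitable powers: from $f\in\langle f,g\rangle$ one gets $A^{r}x^{r\alpha}-C^{r}\in\langle f\rangle$ (since $X^{r}-Y^{r}$ is divisible by $X-Y$ over any commutative ring, applied to $X=Ax^{\alpha}$, $Y=C$), and similarly $B^{s}x^{s\beta}-D^{s}\in\langle g\rangle$. Now substitute $r\alpha=s\beta+d$: multiplying the first relation by $B^{s}$ gives $A^{r}B^{s}x^{s\beta+d}-C^{r}B^{s}\in\langle f,g\rangle$, while multiplying the second (after shifting by $x^{d}$) by $A^{r}$ gives $A^{r}B^{s}x^{s\beta+d}-A^{r}D^{s}x^{d}\in\langle f,g\rangle$. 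Subtracting these two yields $A^{r}D^{s}x^{d}-C^{r}B^{s}\in\langle f,g\rangle$, which is exactly the first asserted element.

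For the second element I would exploit the symmetry of the extended Euclidean algorithm: if $r,s$ satisfy $r\alpha-s\beta=d$, then $r':=\beta/d-r$ and $s':=\alpha/d-s$ satisfy $s'\beta-r'\alpha=d$ (this is a short computation using $\alpha\beta/d-r\alpha+s\beta=\alpha\beta/d-d$). Running the identical elimination argument with the roles of $f$ and $g$ swapped, i.e.\ $A^{r'}x^{r'\alpha}\equiv C^{r'}$ and $B^{s'}x^{s'\beta}\equiv D^{s'}$ combined via $s'\beta=r'\alpha+d$, produces $B^{s'}C^{r'}x^{d}-A^{r'}D^{s'}\in\langle f,g\rangle$, which upon substituting back $r'$ and $s'$ is the second polynomial in the statement. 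Finally, for the third polynomial one raises $f$ to the power $\beta/d$ and $g$ to the power $\alpha/d$, obtaining $A^{\beta/d}x^{\alpha\beta/d}\equiv C^{\beta/d}$ and $B^{\alpha/d}x^{\alpha\beta/d}\equiv D^{\alpha/d}$; multiplying the first by $B^{\alpha/d}$ and the second by $A^{\beta/d}$ makes the $x^{\alpha\beta/d}$-terms equal and subtraction gives $A^{\beta/d}D^{\alpha/d}-C^{\beta/d}B^{\alpha/d}\in\langle f,g\rangle$.

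There is essentially no analytic obstacle; the only thing to get right is the bookkeeping of Bézout coefficients and the sign/positivity convention announced in the statement. The main step worth emphasizing is the observation that choosing $r,s$ from the extended Euclidean algorithm (so that both are non-negative) is what makes the intermediate multipliers $A^{r}$, $B^{s}$, $A^{r'}$, $B^{s'}$ legitimate elements of $R$ and not formal expressions with negative exponents, thereby justifying the ``symbolic'' reading of the three displayed formulas as genuine members of $\langle f,g\rangle$.
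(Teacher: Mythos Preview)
The paper does not actually supply a proof of this lemma; it is stated as a ``small observation'' and immediately used in the proof of Lemma~\ref{lem2}. Your argument is correct and is precisely the natural way to justify such an observation: work modulo $\langle f,g\rangle$, use that $X^{m}-Y^{m}$ is divisible by $X-Y$ to pass from $Ax^{\alpha}\equiv C$ and $Bx^{\beta}\equiv D$ to arbitrary powers, and then eliminate using $r\alpha-s\beta=\gcd(\alpha,\beta)$ (respectively the complementary pair $r',s'$, respectively the common multiple $\alpha\beta/\gcd(\alpha,\beta)$). Your final paragraph correctly isolates the only subtle point, namely that the positivity convention on $r,s$ is what makes $A^{r},B^{s},C^{r'},D^{s'}$ honest ring elements rather than formal symbols. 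One cosmetic remark: the parenthetical justification ``$\alpha\beta/d-r\alpha+s\beta=\alpha\beta/d-d$'' for $s'\beta-r'\alpha=d$ is slightly garbled, though the conclusion is right; the clean one-liner is $s'\beta-r'\alpha=(\alpha/d-s)\beta-(\beta/d-r)\alpha=r\alpha-s\beta=d$.
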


\noindent
Using this, we can now state the main lemma of this section:\\

\begin{lem}
\label{lem2}
Let ${\mathcal X} \subset {\mathbb P}_{w_1,\dots,w_n}$, 
be the $1$-parameter family of Calabi--Yau varieties\footnote{Note that up to 
permutation of variables any $1$-parameter family of Calabi--Yau varieties with given zero 
fibre of Fermat type and perturbation term of weighted degree $d$ can be 
written in this form.} given by the polynomial
$$F=\left(\sum_{i=1}^{n} x_i^{\frac{d}{w_i}}\right)+
    a \cdot \prod_{i=1}^{k} x_i^{\beta_i}$$
where $d=\sum_{i=1}^n w_i = \sum_{i=1}^k \beta_i w_i$,
$\beta_i \neq 0 \forall 1 \leq i \leq k$, and $\gcd(w_1,\dots,w_n)=1$. 
Let $\gamma:=\gcd(\beta_1w_1,\dots,\beta_kw_k)$.
Then the discriminant of the family is
$$V\left(a^{\frac{d}{\gamma}}+(-1)^{\frac{d}{\gamma}-1}\frac{d^{\frac{d}{\gamma}}}{\prod_{i=1}^{k} \beta_iw_i^{\frac{\beta_iw_i}{\gamma}}}\right) 
\subset {\mathbb A}_{\mathbb C}^1.$$
In the respective fibre above each of the $\frac{d}{\gamma}$ points of the 
discriminant there are precisely 
$$\frac{\gcd(w_1,\dots,w_k)}{\prod_{i=1}^k w_i} \cdot d^{k-2} \cdot \gamma$$
singularities with local equation
$$\sum_{i=2}^k x_i^2 + \sum_{i=k+1}^n x_i^{\frac{d}{w_i}}$$
of Milnor number $\prod_{i=k+1}^{n} (\frac{d}{w_{i}}-1)$ and no further 
singularities.
\end{lem}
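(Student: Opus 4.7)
The plan is to derive the equations cutting out the singular locus of the total space, then separately obtain the discriminant as a polynomial in $a$, count the projective points in each singular fibre, and identify the local analytic type of the singularities. Setting $\partial F/\partial x_i = 0$ forces $x_i = 0$ for $i > k$, since only $x_i^{d/w_i}$ involves these variables. For $i \leq k$ one obtains
$$\frac{d}{w_i} x_i^{d/w_i} = -a\beta_i \prod_{l=1}^k x_l^{\beta_l}.$$
Euler's identity for the weighted-homogeneous $F$ makes $F=0$ automatic from the vanishing of all partials, so the singular locus is cut out by these equations alone.

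Comparing the above equations for different $i$ shows that $y:=x_i^{d/w_i}/(\beta_i w_i)$ is independent of $i \leq k$ and that $dy = -a\prod_{l=1}^k x_l^{\beta_l}$. The case $y = 0$ forces all $x_l = 0$ and gives no projective point, hence $y \neq 0$. Raising $dy=-a\prod x_l^{\beta_l}$ to the $(d/\gamma)$-th power and using
$$x_l^{\beta_l d/\gamma}=\bigl(x_l^{d/w_l}\bigr)^{\beta_l w_l/\gamma}=(\beta_l w_l y)^{\beta_l w_l/\gamma}$$
--- this is where Lemma \ref{bezout} applies, since $\beta_l w_l/\gamma\in\mathbb{Z}$ by the definition of $\gamma$ --- the $y$-powers match (thanks to $\sum \beta_l w_l = d$) and cancel, producing precisely the claimed discriminant equation, whose $d/\gamma$ distinct roots account for the stated discriminant points.

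For a fixed $a$ on the discriminant I would count singular points in the chart $x_1 = 1$; every singular point lies there since $x_1^{d/w_1} = \beta_1 w_1 y \neq 0$. With $y = 1/(\beta_1 w_1)$ fixed, the equations $x_j^{d/w_j} = \beta_j w_j y$ for $j = 2,\ldots,k$ have $\prod_{j=2}^k (d/w_j)$ solutions, and the compatibility $\prod_l x_l^{\beta_l} = -dy/a$ selects a single fibre of the character $(\eta_2,\ldots,\eta_k)\mapsto\prod\eta_j^{\beta_j}$ from $\prod\mu_{d/w_j}$ into $\mu_d$. The image of this character has order $d/\gcd(\beta_2 w_2,\ldots,\beta_k w_k,d)=d/\gamma$, so the fibre has size $(\gamma/d)\prod_{j=2}^k(d/w_j)$. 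Dividing by $w_1/\gcd(w_1,\ldots,w_k)$, the size of the residual $\mu_{w_1}$-orbit in the chart (the hypothesis $\gcd(w_1,\ldots,w_n)=1$ rules out extra stabilisers of the original weighted $\mathbb{C}^*$-action), gives the stated count $\gcd(w_1,\ldots,w_k)\cdot d^{k-2}\cdot\gamma/\prod_{i=1}^k w_i$.

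The local analytic type follows from the Thom--Sebastiani splitting $F = F_1(x_2,\ldots,x_k) + F_2(x_{k+1},\ldots,x_n)$ in the chart, where $F_2 = \sum_{j>k} x_j^{d/w_j}$ is Brieskorn--Pham with Milnor number $\prod_{j>k}(d/w_j-1)$. The Hessian of $F_1$ at the singular point is a rank-one update of a diagonal matrix, and a direct computation (using $\frac{d}{w_j}(x_j^0)^{d/w_j} = -\beta_j M$ with $M=a\prod(x_l^0)^{\beta_l}$) yields a determinant of $(w_1\beta_1/d)$ times the determinant of the diagonal part, which is nonzero; hence $F_1$ is an ordinary $A_1$ singularity, and Morse's Lemma converts it to $\sum_{j=2}^k x_j^2$. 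I expect the main obstacle to be the bookkeeping in the third paragraph: the various gcds of the weights, the character-theoretic kernel, and the residual scaling action must align exactly, and Lemma \ref{bezout} is precisely the tool that packages this alignment.
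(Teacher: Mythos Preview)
Your argument is correct and complete, but it follows a genuinely different route from the paper's proof. The paper proceeds algorithmically: it passes to the chart $x_1\neq 0$, then constructs by hand a lexicographic Gr\"obner basis of the relative Tjurina ideal, using Lemma~\ref{bezout} iteratively to eliminate one variable at a time until only a polynomial in $a$ remains (the discriminant). The number of singular points is then read off as a telescoping product of the $x_i$-degrees of the elimination polynomials, and the Tjurina (hence Milnor) number is the product of the leading exponents $d/w_i-1$ for $i>k$ of the surviving pure-power generators.

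You instead introduce the auxiliary quantity $y=x_i^{d/w_i}/(\beta_i w_i)$, obtain the discriminant in one stroke by raising $dy=-aM$ to the $(d/\gamma)$-th power, count points via the kernel of the character $\prod_j\mu_{d/w_j}\to\mu_{d/\gamma}$, and identify the local type by a direct Hessian computation together with the Thom--Sebastiani splitting. Each approach has its merits: the Gr\"obner-basis route is systematic and yields the full Tjurina ideal explicitly (useful for the later sections on the relative $T^1$), while your character-theoretic count and matrix-determinant-lemma Hessian are cleaner and avoid the somewhat heavy iterative bookkeeping. One small remark: your reference to Lemma~\ref{bezout} at the discriminant step is misplaced---all you use there is that $\beta_l w_l/\gamma\in\mathbb{Z}$, which is immediate from the definition of $\gamma$; Lemma~\ref{bezout} is the engine of the paper's elimination procedure, not of yours.
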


\begin{proof}\\
\underline{Preparations}:\\
As we are considering hypersurfaces here, the relative $T^1$ is of the
form $({\mathbb C}[a])[\underline{x}]/J$, where  
$$J=\left\langle F, \frac{\partial F}{\partial x_1}, \dots
            \frac{\partial F}{\partial x_n} \right\rangle$$
is the relative Tjurina ideal (Due to the weighted homogeneity and the 
resulting Euler relation, we can drop one of the $n+1$ generators.). More 
precisely, this ideal actually describes the relative $T^1$ of the affine 
cone over our family and we therefore need to ignore all contributions for 
which the associated prime is the irrelevant ideal. This is not difficult 
here, since intersection with any of the $k$ first coordinate hyperplanes 
immediately leads to an $\langle x_1,\dots,x_n \rangle$ primary ideal, and 
hence passage to any of the first $k$ affine charts immediately
removes precisly the unwanted part, but nothing else. As we are in 
weighted projective space and want to count singularities, our choice of the 
appropriate affine charts needs a little bit of extra caution: a priori we 
count points before the identification and thus might obtain a multiple of
the correct number. Hence the calculated number needs to be divided by
the weight of the respective variable. To simplify the presentation of the
subsequent steps, we choose the chart $x_1 \neq 0$.\\
\\
\underline{Gr\"obner Basis}:\\
Our next step is to compute a Gr\"obner basis of the relative Tjurina
ideal in this chart where $\alpha_i$ denotes $\frac{d}{w_i}$ to shorten
notation. For the structure of the final result, it turns out to be most
suitable to choose a lexicographical ordering with $x_2 > \dots > x_n > a$.
\begin{eqnarray*}
f_0 & = & \left(\sum_{j=2}^{n} x_j^{\alpha_j}\right)+ 1+ a \prod_{j=2}^{k} x_j^{\beta_j}\\
f_i & = & \alpha_i x_i^{\alpha_i -1} + 
          a \beta_i x^{\beta_i -1}\prod_{j=1 \atop j \neq i}^{k} x_j^{\beta_j}
 \;\;\;\;{\rm for}\;\;\; 2 \leq i \leq k \\
f_i & = & \alpha_i x_i^{\alpha_i -1} \;\;\;\;{\rm for}\;\;\;k+1 \leq i \leq n
\end{eqnarray*} 
As $f_0 - \sum_{i=2}^{n} \frac{1}{\alpha_i} x_i f_i 
   = a \frac{1}{\alpha_1} \left(\prod_{i=2}^{k} x_i^{\beta_i}\right) +1$, 
we may safely set 
$$h_0 =\frac{1}{\alpha_1} \left(a\prod_{i=2}^{k} x_i^{\beta_i}\right) +1$$
instead of the original $f_0$.\\ 
Forming $f_2 x_2 - \beta_2 \alpha_1 h_0$ and the s-polynomials of the pairs
$(f_2,h_0), \dots, (f_{k},h_0)$, we obtain new polynomials
$$h_i = x_i^{\alpha_i} - \frac{\beta_i \alpha_1}{\alpha_i}\;\;\; 
  2\leq i\leq k.$$
The leading monomials of these $h_i$, $2 \leq i \leq k$ and of the $f_i$, 
$k < i \leq n$, are obviously pure powers in the respective variables $x_i$. 
We shall use them later on when computing the discriminant.\\
Considering $h_0$ and $h_2$, we now apply Remark \ref{bezout} (polynomial
1 or 2 respectively) and obtain a polynomial 
$$g_2=x_2^{\gcd(\beta_2,\alpha_2)} - 
          {\underbrace{\left(\frac{\beta_i \alpha_1}{\alpha_i}\right)}_{:=c_1}}^r  
          \cdot \left(a \prod_{i=3}^{k} x_i^{\beta_i}\right)^{s}$$
for suitable exponents $r,s\in {\mathbb N}$ as specified in the remark. Please
note that the exponent of $x_2$, $\gcd(\beta_2,\alpha_2)$ can be written as
$\frac{1}{w_2} \gcd(d,\beta_2w_2)$. By polynomial 3 of the same remark
$$h_{0,new}= 
  c_1^{\frac{\beta_2}{\gcd(\alpha_2,\beta_2)}}\cdot \left(\frac{1}{\alpha_1}
  a\prod_{i=3}^{k}x_i^{\beta_i}\right)^{\frac{\alpha_2}{\gcd(\alpha_2,\beta_2)}}
           -1$$
In this expression, the use of properties of $\gcd$ shows that the exponent
of $x_3$ is of the form $\frac{d}{\gcd(d,\beta_2w_2)}$.
Reducing all of the $h_i$ by $g_2$, we obtain polynomials which no longer
depend on $x_2$, because all occurrences of $x_2$ in the $g_2$ were of the
form $x_2^{\beta_2}$. We are hence in the situation to apply Remark 
\ref{bezout} again, this time to $x_3$ and can eventually iterate the
process $k-2$ times. This leads to polynomials of the form
$$g_i=x_i^{\frac{d}{w_i} 
           \frac{\gcd(d,\beta_2w_2,\dots,\beta_iw_i)}
                {\gcd(d,\beta_2w_2,\dots,\beta_{i-1}w_{i-1})}}
     - c_i \cdot p_i(x_{i+1},\dots,x_{k})$$
for each $3 \leq i \leq k$. \\
To determine the discriminant we could now continue one step further,
eliminating $x_{k}$, but here it is easier to observe (e.g. by explicit 
polynomial division) that for any polynomial $1-p(\underline{x},a)$, also 
every polynomial $1-p(\underline{x},a)^k$ is in the ideal.
Applying this to $h_0$ and the $\frac{d}{\gamma}$-th power, where 
$$\gamma=\gcd(\beta_1w_1,\dots,\beta_kw_k)=\gcd(d,\beta_2w_2,\dots,\beta_kw_k),$$
we obtain
$$h_{k+1}= 1 - \left(\frac{1}{\alpha_1} a\prod_{i=2}^{k} 
               x_i^{\beta_i}\right)^{\frac{d}{\gamma}}.$$
But the exponents $\alpha_i$ of the leading monomials of the $h_i$ all divide 
$\beta_i \gamma$ for $2 \leq i \leq k$ by construction
which allows reduction of $h_{k+1}$ by these and leads to the claimed 
expression
$$g_{n+1} = a^{\frac{d}{\gamma}} 
            \frac{\prod_{i=1}^k (\beta_iw_i)^{\frac{\beta_iw_o}{\gamma}}}
                 {d^{\frac{d}{\gamma}}} 
          + (- 1)^{\frac{d}{\gamma}-1}.$$ \\
To finish the Gr\"obner basis calculation, let us first consider the set
of polynomials  
$S=\{h_2,\dots,h_{n},g_2,\dots,g_{k},g_{n+1}\}$. For $2 \leq i \leq k$ we 
drop $h_i$ from it, if the $x_i$-degree of $g_i$ is strictly smaller than 
the one of $h_i$, otherwise we drop $g_i$. The resulting set then contains 
$n$ polynomials of which each of the first $n-1$ has a pure power of the 
respective variable $x_i$ as leading monomial, and the last element $g_{k+1}$ 
which has a leading monomial not involving any of the $x_i$. Hence this 
set obviously forms a Gr\"obner basis of some ideal, because all 
s-polynomials vanish by the product criterion. It then remains to show that 
the original polynomials $f_0,\dots,f_n$ reduce to zero w.r.t. this set 
which can be checked by a straight forward but lengthy calculation.\\
\\
\underline{Reading off the data}:\\
It is clear that $a$ takes precisely the $\frac{d}{\gamma}$ 
values $$\sqrt[{\frac{d}{\gamma}}]
              {\frac{d^{\frac{d}{\gamma}}}
              {\prod_{i=1}^k (\beta_iw_i)^{\frac{\beta_iw_i}{\gamma}}}}
         \cdot \zeta$$ 
where $\zeta$ runs through all the $\frac{d}{\gamma}$-th roots of unity.
At each of these points in the base, we can obtain the number of singularities
by plugging in the value for $a$ into $g_{k}$ and counting solutions, 
followed by the values 
for $a$ and $x_{k}$ into $g_{k-1}$ and so on, where $x_{k+1}=\dots=x_n=0$. 
This leads to the expression
$$\frac{1}{w_2} \gcd(d,\beta_2w_2) 
  \frac{d}{w_3} \frac{\gcd(d,\beta_2w_2,\beta_3w_3)}{\gcd(d,\beta_2w_2)}
  \dots
  \frac{d}{w_k} \frac{\gcd(d,\beta_2w_2,\dots,\beta_kw_k)}
                     {\gcd(d,\beta_2w_2,\dots,\beta_{k-1}w_{k-1})}$$
for the number of singular points,
which after simplification of the expression and
multiplication by $\frac{\gcd(w_1,\dots,w_k)}{w_1}$ (to take account of the 
identification of points in weighted projective space) leads 
to the claimed number.\\
The multiplicity of each of these points is then given by the product of the
powers of the variables $x_i$ in the polynomials $h_i$, $k+1 \leq i \leq n$.
As the Gr\"obner basis generates the global Tjurina ideal of the fibre for 
each fixed value of $a$, the corresponding support describes the singular 
locus and the local multiplicity at each of the finitely many points is 
precisely the Tjurina number. By considering the corresponding local equations,
we can then check that the Tjurina number and Milnor number coincide for
the arising singularities.
\end{proof}

Considering the extreme cases of the families with the highest and lowest
numbers of singularities, we obtain:
plural
\begin{cor}
Let ${\mathcal X} \subset {\mathbb P}_{w_1,\dots,w_n}$ 
be the 1-parameter family of Calabi--Yau varieties given by the polynomial
$$F=\left(\sum_{i=1}^{n} x_i^{\frac{d}{w_i}}\right)+a \cdot \prod_{i=1}^{n} x_i$$
where $d=\sum_{i=1}^n w_i$ and $\gcd(w_1,\dots,w_n)=1$. Then the 
discriminant of the family is
$$V\left(a^d+(-1)^{d-1}\frac{d^{d}}{\prod_{i=1}^{n} w_i^{w_i}}\right) \subset 
 {\mathbb A}_{\mathbb C}^1.$$
In the respective fibre, above each of the $d$ points of the discriminant,
there are precisely 
$$\frac{d^{n-2}}{\prod_{i=1}^n w_i}$$
ordinary double points (with Milnor number $\mu=1$ and Tjurina number $\tau=1$)
and no further singularities.
\end{cor}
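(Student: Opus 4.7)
The plan is to obtain the corollary as a direct specialization of Lemma \ref{lem2}. The given family is precisely the case $k=n$ and $\beta_i=1$ for all $i$ of the family in Lemma \ref{lem2}, since $d=\sum_{i=1}^n w_i=\sum_{i=1}^n \beta_i w_i$ is automatically satisfied. So the first step is simply to check that the hypotheses of Lemma \ref{lem2} hold, which is immediate.

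Next, I would compute the parameter $\gamma=\gcd(\beta_1 w_1,\dots,\beta_k w_k)$ under this specialization. Since all $\beta_i=1$ and $k=n$, this becomes $\gamma=\gcd(w_1,\dots,w_n)$, which equals $1$ by assumption. Consequently $d/\gamma=d$, and the discriminant formula from Lemma \ref{lem2} reduces to
$$V\!\left(a^{d}+(-1)^{d-1}\frac{d^{d}}{\prod_{i=1}^{n}\beta_i w_i^{\beta_i w_i/\gamma}}\right)=V\!\left(a^{d}+(-1)^{d-1}\frac{d^{d}}{\prod_{i=1}^{n} w_i^{w_i}}\right),$$
as claimed.

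Then I would read off the number of singular points in each singular fibre from the formula in Lemma \ref{lem2}: substituting $k=n$ and $\gamma=1$,
$$\frac{\gcd(w_1,\dots,w_n)}{\prod_{i=1}^n w_i}\cdot d^{n-2}\cdot \gamma=\frac{d^{n-2}}{\prod_{i=1}^n w_i},$$
which is the asserted count.

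Finally, to identify the singularity type, I would specialize the local equation given in Lemma \ref{lem2}, namely $\sum_{i=2}^k x_i^2+\sum_{i=k+1}^n x_i^{d/w_i}$, to the case $k=n$; the second sum becomes empty and the local equation becomes $\sum_{i=2}^n x_i^2$, which is the standard equation of an ordinary double point (node). The Milnor number formula $\prod_{i=k+1}^n(d/w_i-1)$ becomes an empty product, giving $\mu=1$, and the equality $\mu=\tau$ for nodes gives $\tau=1$ as well. The only subtle point worth double-checking is that the identification of this normal form really does arise from the Gröbner basis computation in the proof of Lemma \ref{lem2} when $k=n$, in particular that no $x_i^{d/w_i}$ terms survive in the local analytic model; this follows because in the $k=n$ case every variable already appears in the perturbation term $a\prod x_i$ and hence is treated by the $h_i$ with $2\le i\le k$ rather than by the untouched $f_i$ with $k<i\le n$. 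Once this is observed, no further work is required.
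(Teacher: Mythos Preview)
Your proposal is correct and matches the paper's approach: the paper presents this statement as an immediate corollary of Lemma~\ref{lem2} obtained by specializing to $k=n$ and $\beta_i=1$, exactly as you do. Your explicit verification of the discriminant, the singularity count, the local normal form, and $\mu=\tau=1$ via the empty product is precisely the intended reading.
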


\begin{cor}
Let ${\mathcal X} \subset {\mathbb P}_{w_1,\dots,w_n}$ 
be the 1-parameter family of Calabi--Yau varieties given by the polynomial
$$F=\left(\sum_{i=1}^n x_i^{\frac{d}{w_i}}\right)+ax_1^{\frac{d-w_n}{w_1}}x_n$$
where $d=\sum_{i=1}^5 w_i$ and $w_1 | w_n$. Then the discriminant 
of the family is
$$V\left(a^{\frac{d}{w_n}}+ (-1)^{\frac{d}{w_n}-1} 
    \frac{d^{\frac{d}{w_n}}}{w_n(d-w_n)^{\frac{d}{w_n}-1}}\right)
    \subset {\mathbb A}_{\mathbb C}^1.$$
In the respective fibre above each of the $\frac{d}{w_n}$ points of the 
discriminant there is precisely $1$ isolated singularity of which the
local normal form (after moving to the coordinate origin) is
$$\left(\sum_{i=2}^{n-1}x_i^{\frac{d}{w_i}}\right)+x_n^2$$
with Milnor number 
$\mu =\prod_{i=2}^{n-1}\left(\frac{d}{w_i}-1\right).$
\end{cor}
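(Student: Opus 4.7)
The plan is to deduce this corollary as a direct specialization of Lemma \ref{lem2} to the case where the perturbation term involves exactly two coordinate variables. The perturbation $a\,x_1^{(d-w_n)/w_1}\,x_n$ touches only $x_1$ and $x_n$, so after the permutation of variables permitted by the footnote to Lemma \ref{lem2} it fits the template of that lemma with $k=2$, the two active variables being (in the permuted labeling) the first two. The divisibility hypothesis $w_1 \mid w_n$, combined with $w_1 \mid d$, guarantees that the exponent $(d-w_n)/w_1$ is a positive integer, so the polynomial $F$ is well-defined, and the weighted-degree constraint $\sum_i \beta_i w_i = d$ from Lemma \ref{lem2} is automatically satisfied.

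With this identification, the argument reduces to bookkeeping. The relevant weighted exponents are $\beta_1 w_1 = d - w_n$ and $\beta_2 w_2 = w_n$, and since $w_n \mid d$ we have $\gamma = \gcd(d-w_n,\, w_n) = w_n$. Substituting into the discriminant formula of Lemma \ref{lem2} yields exactly
$$a^{d/w_n} + (-1)^{d/w_n - 1}\,\frac{d^{d/w_n}}{w_n(d-w_n)^{d/w_n - 1}},$$
as claimed. The count of singular points per fibre likewise specializes to
$$\frac{\gcd(w_1, w_n)}{w_1 w_n}\cdot d^{\,0}\cdot w_n \;=\; \frac{w_1}{w_1 w_n}\cdot w_n \;=\; 1,$$
where the hypothesis $w_1 \mid w_n$ is used to evaluate the greatest common divisor.

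Finally, the local normal form $\sum_{i=2}^{k} x_i^2 + \sum_{i=k+1}^{n} x_i^{d/w_i}$ furnished by Lemma \ref{lem2}, after undoing the permutation so that the quadratic term is written in $x_n$, becomes $\sum_{i=2}^{n-1} x_i^{d/w_i} + x_n^2$ as asserted, and its Milnor number is the standard Brieskorn--Pham product $\prod_{i=2}^{n-1}(d/w_i - 1)$. Because the substantial work (the Gr\"obner basis computation, the identification of the discriminant, and the extraction of the local singularity data) has already been performed in the proof of Lemma \ref{lem2}, no genuinely new obstacle arises here; the only point requiring care is keeping track of the variable permutation when translating the lemma's output back to the original labeling.
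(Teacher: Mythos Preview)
Your proposal is correct and is exactly the intended derivation: the paper states this corollary (together with its companion for $\beta=(1,\dots,1)$) as an immediate specialization of Lemma~\ref{lem2} without a separate proof, and you have carried out precisely that specialization, correctly handling the variable permutation, the computation $\gamma=\gcd(d-w_n,w_n)=w_n$, and the resulting substitutions into the discriminant, singularity count, local normal form, and Milnor number.
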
 

\subsection{Some particular 2-parameter families}

In this case, the Gr\"obner basis of the relative Tjurina ideal is far
too complicated to write down in general. Nevertheless, it is possible to
follow the lines of some of the calculations of the previous subsection
to specify and study the discriminant of some families. By analysis of the 
discriminant it is then possible to precisely classify the arising 
singularities in explicit families. \\
\begin{lem}
Let ${\mathcal X} \subset {\mathbb P}_{w_1,\dots,w_n}$
be the 2-parameter family of Calabi--Yau (n-2)-folds given by the polynomial
$$F=\sum_{i=1}^{n} x_i^{\frac{d}{w_i}}+ a \prod_{i=1}^n x_i 
   +bx_1^{\beta_1}x_2^{\beta_2}$$
where $d=\sum_{i=1}^n w_i$ and $\beta_1w_1+\beta_2w_2=d$. 
Then the discriminant of this 2-parameter family is reducible and its 
irreducible components can be sorted into two different kinds: 
\begin{itemize}
\item Lines $L_i$ parallel to the $a$-axis, which are 
          determined by the discriminant of  
          $$x^{\frac{d}{w_2}} + bx^{\beta_2} + 1$$ \\
\item A (possibly reducible) curve $C$ which can be specified as the 
      resultant of 
          $$a^d x_2^d - 
           \frac{d^{d-2}}{\prod_{i=3}^n w_i^{w_i}}
                         \left(\beta_1 bx_2 + \frac{d}{w_1}\right)$$
          and
          $$\frac{d}{w_2} x_2^{\frac{d}{w_2}} 
           + (\beta_2 - \beta_1) b x_2^{\beta_2} 
           - \frac{d}{w_1}.$$
\end{itemize}
\end{lem}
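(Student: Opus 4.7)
The plan is to mirror the proof of Lemma~\ref{lem2}, passing to the affine chart $x_1\neq 0$ with $x_1=1$ (keeping in mind the $w_1$-fold identification coming from the weighted projective action), and using the Euler relation to drop $\partial F/\partial x_1$ from the relative Tjurina ideal. One then works with $F$ together with $\partial F/\partial x_i$ for $i=2,\dots,n$. For $i\geq 3$ the $i$-th derivative has the particularly simple shape $(d/w_i)\, x_i^{d/w_i - 1} + a\, x_2 \prod_{j\geq 3,\, j\neq i} x_j$.

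The decisive step is a case split according to whether any of $x_3,\dots,x_n$ vanishes at the singular point. If $x_i=0$ for some $i\geq 3$, the $i$-th derivative equation forces $a\cdot(\text{product of the remaining }x_j)=0$, and a short cascade on the other derivative equations shows that this forces all of $x_3,\dots,x_n$ to vanish (the case $a=0$ falls under the same argument with the $a$-term dropped). In this ``all-zero'' case, $F$ and $\partial F/\partial x_2$ reduce to a pair of equations in the single variable $x_2$ and the parameter $b$, completely independent of $a$; their common vanishing is governed precisely by the classical discriminant of the trinomial $1+x^{d/w_2}+b\,x^{\beta_2}$. Each critical value of $b$ so obtained gives a line in $(a,b)$-space parallel to the $a$-axis, producing the family of components $L_i$.

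In the remaining case, where all $x_i$ with $i\geq 3$ are nonzero, I would apply the same trick as in Lemma~\ref{lem2}: multiplying $\partial F/\partial x_i=0$ by $x_i$ and setting $M:=-a\,x_2 x_3\cdots x_n$ yields $x_i^{d/w_i}=(w_i/d)\,M$ for $i\geq 3$, so that every $x_i^{d/w_i}$ is uniformly determined by $M$. Substituting these identities into $F=0$ and into $x_2\cdot\partial F/\partial x_2=0$ produces two expressions for $M$ in terms of $x_2$ and $b$; equating them and using the weighted-degree relation $\beta_1 w_1+\beta_2 w_2=d$ collapses to the second polynomial of the statement. For the first polynomial one raises the identity $P:=x_2 x_3\cdots x_n=-M/a$ to the $d$-th power and replaces $x_i^d$ by $((w_i/d)M)^{w_i}$ for $i\geq 3$, eliminating $x_3,\dots,x_n$ entirely; after reducing the resulting power of $M$ modulo the second polynomial one obtains the first. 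Taking the resultant of the two polynomials with respect to $x_2$ then cuts out the defining ideal of $C\subset\mathbb{A}^2_{(a,b)}$.

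The main obstacle lies in the elimination bookkeeping of the all-nonzero case: one must verify that the power-raising and subsequent reduction of $M$ do not introduce spurious factors, so that the two derived polynomials really describe the image of the singular locus under projection to $(a,b)$-space, rather than a proper subscheme. A minor technical point is the $w_1$-fold chart identification, which can affect the multiplicities of the components but not which components actually appear in the discriminant.
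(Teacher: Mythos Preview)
Your approach is correct and matches the paper's closely. The paper also works in the chart $x_1=1$ and derives the elements $h_0=a\prod_{i\geq 2}x_i+(\beta_1 b x_2^{\beta_2}+d/w_1)$, $h_2$ (your second polynomial), and $h_i=(d/w_i)x_i^{d/w_i}-(\beta_1 b x_2^{\beta_2}+d/w_1)$ for $i\geq 3$; it then raises $h_0$ to the $d$-th power, reduces by $h_3,\dots,h_n$, and branches on the resulting two factors. Your geometric case split ``some $x_i=0$ versus all $x_i\neq 0$ for $i\geq 3$'' is precisely this same branching in disguise, since on the singular locus the relations $h_0,h_i$ show that $x_3=\dots=x_n=0$ is equivalent to $M:=\beta_1 b x_2^{\beta_2}+d/w_1=0$, which is exactly the paper's first factor; in that branch both arguments observe that $F$ and $\partial F/\partial x_2$ restrict to the trinomial and its derivative. (A small remark: your cascade is cleaner if you note directly that for any $j\geq 3$ with $j\neq i$ the product in $\partial F/\partial x_j$ already contains $x_i=0$, so $x_j=0$ without needing to split on $a$.) Your caveat about possible spurious factors in the elimination is fair, but the paper proceeds at the same level of rigor and does not address it either.
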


\begin{proof}
As before, we choose a suitable affine chart, say $x_1 \neq 0$, and fix
a lexicographical monomial ordering $x_n > \dots > x_2 > a > b$. But here
an explicit computation of a Gr\"obner basis of the Tjurina ideal cannot 
be performed in all generality. Instead, we can proceed analogous to the 
steps of the proof of Lemma \ref{lem2} and obtain the following elements 
of the ideal:
\begin{eqnarray*}
h_i & = & \frac{d}{w_i} x_i^{\frac{d}{w_i}} - \beta_1 b x_2^{\beta_2} 
               - \frac{d}{w_1} \forall 3 \leq i \leq n\\
h_2 & = & \frac{d}{w_2} x_2^{\frac{d}{w_2}} 
               + (\beta_2 - \beta_1) b x_2^{\beta_2} 
               - \frac{d}{w_1}\\
h_0 & = & a \prod_{i=2}^n x_i + \left(\beta_1 b x_2^{\beta_2} + \frac{d}{w_1}\right)
\end{eqnarray*}
As before, we can again conclude that also
$$\left(a \prod_{i=2}^n x_i\right)^d - \left(\beta_1 b x_2^{\beta_2} + \frac{d}{w_1}\right)^d$$
is in the ideal and forming a normal form w.r.t. $h_3,\dots,h_n$ then
yields
$$\left(\beta_1 b x_2^{\beta_2} + \frac{d}{w_1}\right)^{\sum_{i=3}^n w_i} \cdot
  \left(a^d x_2^d \prod_{i=3}^n w_i^{w_i} - 
          d^{d-2} \left(\beta_1 b x_2^{\beta_2} + \frac{d}{w_1}\right)^{w_1+w_2}\right)$$
At this point, we can branch our computation and consider each factor
separately.\\[0.2cm]

\underline{$g_1=\left(\beta_1 b x_2^{\beta_2} + \frac{d}{w_1}\right)$}:
Here we directly obtain
$$g_2 = g_1 + h_2 
      = \frac{d}{w_2}x_2^{\frac{d}{w_2}} + \beta_2 b x_2^{\beta_2}$$
and 
$$g_3 = \frac{w_1}{d} g_1 + \frac{w_2}{d} g_2 
      = x_2^{\frac{d}{w_2}} + bx_2^{\beta_2} + 1.$$
Therefore the resultant of $g_2$ and $g_3$ is also contained in the ideal.
On the other hand, $g_2 = x_2 \frac{\partial g_3}{\partial x_2}$ and hence
the above resultant is just the discriminant of $g_3$ by the rules for 
computing resultants and the fact that 
$Res_{x_2}(g_3,x_2)=1$.\\[0.2cm]

\underline{$g_4 = a^d x_2^d \prod_{i=3}^n w_i^{w_i} - 
          d^{d-2}\left(\beta_1 b x_2^{\beta_2} + \frac{d}{w_1}\right)^{w_1+w_2}$}:
As $g_4$ and $h_2$ are both in the ideal so is their resultant w.r.t. 
$x_2$ which describes the desired curve.
\end{proof}

On the basis of this lemma, it is now easy to treat interesting special 
cases, which we want to consider in a later section of this article, 
by a straight-forward computation. In order to treat such examples 
by the combinatorial algorithm for determining the zeta-function, the
two perturbation monomials need to be in the same strong $\beta$-orbit in
the sense that the orbit structure w.r.t. the second monomial refines the
one w.r.t. the first monomial. As this is a rather restrictive condition
on the possible choices of monomials, we only state a choice of three 
explicit examples in Section \ref{twoparameter}. 

\section{The influence of singularity data on the zeta function}

In the previous section, we analysed the singularity structure of some
$1$- and $2$-parameter families of Calabi--Yau varieties and, in particular,
the structure of the Milnor algebra which encodes cohomological information
about the singularities. Now we shift our focus to the computation of the
local zeta-function for these families and re-encounter combinatorial data
which we already saw in the previous section.\\

\begin{rem}
Recalling Definition \ref{strong} of strong motivic $\beta$-classes in 
$\mathcal{M}$, it is easy to show that each strong $ \beta$-class, 
$\mathcal{C}_{\beta}$, is a set with cardinality $d_{\beta}$, where
$$d_\beta=\lcm_{\beta_i\neq 0}(\ord(\beta_i))
         =\lcm_{\beta_i\neq 0}\left(\frac{d}{\gcd(\beta_iw_i,d)}\right)
         =\frac{d}{\gcd_{\beta_i\neq 0}(\beta_iw_i)}.$$
Hence the total number of strong $\beta$-classes, $\mathcal{O}_{\beta}$ is
$$\mathcal{O}_{\beta}
         =|\mathcal{M}|\frac{\gcd_{\beta_i\neq 0}(\beta_iw_i)}{d}.$$
\end{rem}

\begin{lem}
Let $w_1,\dots,w_n$ be a set of weights satisfying the conditions of 
Defintion \ref{strong}. The total number of elements in ${\mathcal M}$ is
$$ |{\mathcal M}| = \left(\prod_{i=1}^n \frac{d}{w_i}\right) \frac{1}{d_{\bf c}},$$
where $d_c$ denotes the cardinality of a strong ${\bf c}=(1,\dots,1)$-class.
\end{lem}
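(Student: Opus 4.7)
The plan is to realise $\mathcal{M}$ as the kernel of a natural group homomorphism and then apply the first isomorphism theorem. Set
\[ G := \prod_{i=1}^n \mathbb{Z}/(d/w_i)\mathbb{Z}, \]
which is the ambient group in which $\mathcal{M}$ sits (using the convention of the definition that the $i$th component is taken mod $d/w_i$); clearly $|G|=\prod_{i=1}^n d/w_i$. Now introduce
\[ \phi\colon G \longrightarrow \mathbb{Z}/d\mathbb{Z}, \qquad \mathbf{x}\longmapsto \sum_{i=1}^n w_i x_i \bmod d. \]
This is well defined, since shifting any $x_i$ by a multiple of $d/w_i$ shifts $w_i x_i$ by a multiple of $d$, and it is an additive homomorphism. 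The defining condition $\mathbf{x}\cdot\mathbf{w}=ld$ of $\mathcal{M}$ is exactly $\phi(\mathbf{x})=0$, so $\mathcal{M}=\ker\phi$ and consequently
\[ |\mathcal{M}| \;=\; \frac{|G|}{|\operatorname{im}\phi|} \;=\; \frac{\prod_{i=1}^n d/w_i}{|\operatorname{im}\phi|}. \]

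The problem thus reduces to computing the order of $\operatorname{im}\phi$, which is the additive subgroup of $\mathbb{Z}/d\mathbb{Z}$ generated by $w_1,\dots,w_n$. Because each $w_i$ already divides $d$, that subgroup equals $\gcd(w_1,\dots,w_n)\,\mathbb{Z}/d\mathbb{Z}$ and so has order $d/\gcd(w_1,\dots,w_n)$. Applying the preceding remark with $\beta={\bf c}=(1,\dots,1)$ (all entries non-zero) gives $d_{\bf c}=d/\gcd(w_1,\dots,w_n)$, hence $|\operatorname{im}\phi|=d_{\bf c}$ and the claimed identity drops out.

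The argument is essentially a single invocation of $|\ker\phi|\cdot|\operatorname{im}\phi|=|G|$, so no step is really technical. The only thing I would look at twice is that the statement does \emph{not} secretly assume $\gcd(w_1,\dots,w_n)=1$: the presence of $d_{\bf c}$ rather than $d$ in the denominator is precisely what absorbs this gcd in full generality, which is why the remark is invoked rather than just writing $d$ directly.
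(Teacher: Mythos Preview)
Your proof is correct and follows essentially the same idea as the paper's: both identify $\mathcal{M}$ as the fibre over $0$ of the weighted-degree map from $\prod_i w_i\mathbb{Z}/d\mathbb{Z}$ to $\mathbb{Z}/d\mathbb{Z}$ and then divide the size of the domain by the size of the image. Your phrasing via the first isomorphism theorem is a little cleaner and, as you correctly flag, strictly more general: the paper's proof asserts that the weighted degree hits every value in $\{0,\dots,d-1\}$ and divides by $d$, which already uses $\gcd(w_1,\dots,w_n)=1$, and only afterwards invokes that hypothesis explicitly to rewrite $d$ as $d_{\bf c}$. Your computation of $|\operatorname{im}\phi|=d/\gcd(w_1,\dots,w_n)=d_{\bf c}$ handles arbitrary weights in one stroke and makes transparent why the statement is formulated with $d_{\bf c}$ rather than $d$.
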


\begin{proof} 
The total number of monomials in 
$${\mathcal W}:=\prod_{i=1}^n w_i{\mathbb Z}/d{\mathbb Z}$$
is given by the product of the number of possible entries in each position,
i.e. $\prod_{i=1}^n \frac{d}{w_i}$. Modulo $d$, the weighted degree of 
an element of ${\mathcal W}$ can take any value in $\{0,\dots,d-1\}$ and the
number of elements of ${\mathcal W}$ mapping to the same class of weighted
degree modulo $d$ is precisely $\frac{1}{d} |{\mathcal W}|$.
Hence, this is the number of elements of weighted degree $0$ modulo $d$, 
i.e.
$$|{\mathcal M}| = \frac{1}{d} \prod_{i=1}^n \frac{d}{w_i}.$$
For later considerations, it will be convenient to modify this formula
slightly using that $\gcd(w_1,\dots,w_n)=1$ implies $d=d_c$, which proves 
the claimed formula.
\end{proof}

\begin{rem}
For any given $k \in \{1,\dots,n\}$, we can partition ${\mathcal M}$ into 
subsets for which the last $(n-k)$ entries coincide. A priori, there are 
$\prod_{i=k+1}^n \frac{d}{w_i}$ possibilities for the last $(n-k)$ entries.
As the front part of any element of ${\mathcal M}$, i.e. the first $k$ 
entries of the element, can only provide weighted degrees which are multiples 
of $\gcd(w_1,\dots,w_k)$ and as the weighted degree of any element of 
${\mathcal M}$ is a multiple of $d$, not all combinations of the last
$(n-k)$ entries can actually occur, but only those which themselves also provide
multiples of $\gcd(w_1,\dots,w_k)$ as weighted degree. Hence the total 
number of these subsets of ${\mathcal M}$ is
$$ \frac{1}{\gcd(w_1,\dots,w_k)} \prod_{i=k+1}^n \frac{d}{w_i}.$$
\end{rem} 

Combining these observations and the lemma, we obtain the following
result for the number of strong $\beta$-classes which share the same last $(n-k)$ entries:

\begin{cor}
Let $\beta \in {\mathcal M}$ satisfy $l(\beta)=1$ and 
$\beta_{k+1}=\dots=\beta_n=0$. Then the number of elements of ${\mathcal M}$
which share the same last $(n-k)$ entries is precisely
$$\frac{\gcd(w_1,\dots,w_k)}{d} \prod_{i=1}^k \frac{d}{w_i}$$
and the number of strong $\beta$-classes with these last $(n-k)$ entries is
$$T_{\beta}=\frac{\gcd(w_1,\dots,w_k)}{d} 
            \frac{\gcd(\beta_1w_1,\dots,\beta_kw_k)}{d} 
            \prod_{i=1}^k \frac{d}{w_i},$$
which coincides with the total number of singularities in the singular fibre
of a 1-parameter family of Fermat-type Calabi--Yau varieties with perturbation 
term $\underline{x}^{\beta}$ as considered in section \ref{SingAnalysis}.
\end{cor}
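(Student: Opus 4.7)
The plan is to verify the two cardinality statements in turn and then match the resulting $T_\beta$ with the per-fibre singularity count from Lemma \ref{lem2}. The first count follows from dividing the total $|\mathcal{M}|$ by the number of admissible last-entry tuples, the second is obtained by further quotienting each such fibre by the $\beta$-action, and the final comparison is a direct simplification.

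For the first count, I would combine the preceding Lemma, which gives $|\mathcal{M}| = \frac{1}{d}\prod_{i=1}^n \frac{d}{w_i}$, with the preceding Remark, which gives $\frac{1}{\gcd(w_1,\dots,w_k)}\prod_{i=k+1}^n \frac{d}{w_i}$ admissible tuples for the last $(n-k)$ entries. To justify dividing, one observes that each non-empty fibre of the projection onto the last $(n-k)$ coordinates is a coset of the subgroup $G = \{(y_1,\dots,y_k) \in \prod_{i=1}^k w_i\mathbb{Z}/d\mathbb{Z} : \sum y_iw_i \equiv 0 \bmod d\}$, hence of uniform cardinality; the division then yields the claimed $\frac{\gcd(w_1,\dots,w_k)}{d}\prod_{i=1}^k \frac{d}{w_i}$.

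For the second count, the key observation is that $\beta_{k+1}=\dots=\beta_n=0$ forces $\sim_\beta$ to preserve the last $(n-k)$ entries, so each strong $\beta$-class lies within a single fibre from the previous step. By the preceding Remark, each class has cardinality $d_\beta = d/\gamma$ where $\gamma = \gcd(\beta_1w_1,\dots,\beta_kw_k)$, so the number of $\beta$-classes per fibre is the fibre size divided by $d_\beta$, which rearranges precisely to the stated expression for $T_\beta$. Finally, rewriting $\prod_{i=1}^k \frac{d}{w_i} = d^k/\prod_{i=1}^k w_i$ gives $T_\beta = \frac{\gcd(w_1,\dots,w_k)\,\gamma\,d^{k-2}}{\prod_{i=1}^k w_i}$, which matches the per-fibre singularity count established in Lemma \ref{lem2}.

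The only substantive points are the two uniformity assertions: that every non-empty projection fibre has the same size, and that every strong $\beta$-class has exactly $d_\beta$ elements rather than a proper divisor. The first is immediate from the coset structure above, and the second is an orbit-stabiliser argument for the $\mathbb{Z}$-action by $\beta$-translation, whose stabiliser is $d_\beta\mathbb{Z}$ by the explicit computation of $d_\beta$ in the preceding Remark. With these in place, the rest of the argument is bookkeeping.
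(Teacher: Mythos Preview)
Your proof is correct and is precisely the argument the paper intends: the corollary is stated without proof, introduced only by ``Combining these observations and the lemma, we obtain\ldots'', and you have made exactly that combination explicit --- dividing $|\mathcal{M}|$ from the preceding lemma by the count of admissible tail-tuples from the preceding remark, then by $d_\beta$ from the earlier remark on class cardinality, and finally matching against Lemma~\ref{lem2}. Your added justifications for the two uniformity points (coset structure of the projection fibres, orbit--stabiliser for the $\beta$-translation) are the natural supporting details the paper leaves implicit.
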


Applying this corollary to the two special cases of 1-parameter families
considered in \ref{SingAnalysis},
we find precisely the number of 
$A_1$-singularities in the case $\beta=(1,\dots,1)$ and 1 for the completion 
of the square. This establishes the first of the two correspondences, which
we discuss here. The second one is more subtle and links the Milnor number 
to the contributions of each $\beta$-class to the zeta-function. It is known, 
that among the monomials in ${\mathcal M}$ only those that do not contain any
entry of the form $ \left (\frac{d}{w_i}-1\right)$ in the $i$-th position 
should be counted when computing the degree of the associated piece of the 
zeta-function. Therefore counting the number of possibe ways of 
constructing such monomials seems a natural question to consider and leads
to the following observation:\\

\begin{lem}
Let $\beta \in {\mathcal M}$ satisfy $l(\beta)=1$,
$\beta_{k+1}=\dots=\beta_n=0$ and $\gcd(w_1,\dots,w_k)=1$. Then the number
of tuples which appear as the last $(n-k)$ entries in an element of 
${\mathcal M}$ and do not involve any entry of the form $ \left (\frac{d}{w_i}-1\right)$,
is precisely
$$ \prod_{i=k+1}^n \left (\frac{d}{w_i} -1 \right).$$
This coincides with the Milnor number of the appearing singularities
according to \ref{SingAnalysis}.
\end{lem}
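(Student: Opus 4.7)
The plan is to reduce the lemma to an elementary counting problem by showing that, under the hypothesis $\gcd(w_1,\dots,w_k)=1$, the condition of belonging to $\mathcal M$ places no arithmetic constraint on the last $(n-k)$ entries. Once that is established, the count is just the number of ways to fill $(n-k)$ slots while forbidding one specific value in each.

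First, I would unpack the setup: an element of $\mathcal M$ is encoded by exponents $e_i\in\{0,1,\dots,\tfrac{d}{w_i}-1\}$ with $\sum_{i=1}^n e_iw_i\equiv 0\pmod d$. A tuple $(e_{k+1},\dots,e_n)$ in the respective ranges appears as the last $(n-k)$ entries of some element of $\mathcal M$ iff there exist $(e_1,\dots,e_k)$ with $e_i\in\{0,\dots,\tfrac{d}{w_i}-1\}$ such that
\[
\sum_{i=1}^k e_iw_i \;\equiv\; -\sum_{i=k+1}^n e_iw_i \pmod d.
\]

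Second, I would show this congruence is always solvable. As $e_i$ varies over $\{0,\dots,\tfrac{d}{w_i}-1\}$, the value $e_iw_i$ ranges over every multiple of $w_i$ in $[0,d)$, hence covers the cyclic subgroup $\langle w_i\rangle\subseteq\mathbb Z/d\mathbb Z$. The subgroup generated by $\langle w_1\rangle,\dots,\langle w_k\rangle$ equals $\langle\gcd(w_1,\dots,w_k)\rangle$, which is all of $\mathbb Z/d\mathbb Z$ by assumption. Consequently every residue modulo $d$ is attained, and in particular any prescribed value of $-\sum_{i>k}e_iw_i$ can be matched. Thus \emph{every} tuple $(e_{k+1},\dots,e_n)\in\prod_{i=k+1}^n\{0,\dots,\tfrac{d}{w_i}-1\}$ extends to an element of $\mathcal M$.

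Third, I would perform the count. The admissible last tuples form the full product $\prod_{i=k+1}^n\{0,\dots,\tfrac{d}{w_i}-1\}$; excluding the single value $\tfrac{d}{w_i}-1$ in each of the $n-k$ coordinates leaves $\tfrac{d}{w_i}-1$ choices per position, and since the choices are independent, the total count is $\prod_{i=k+1}^n\bigl(\tfrac{d}{w_i}-1\bigr)$. Comparison with the Milnor number of the singularity $\sum_{i=2}^k x_i^2+\sum_{i=k+1}^n x_i^{d/w_i}$ from Lemma \ref{lem2} closes the loop.

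There is no real obstacle; the only delicate point is that the bounded range $e_i<\tfrac{d}{w_i}$ for the first $k$ coordinates could, a priori, obstruct solvability of the congruence. The subgroup argument above circumvents this because each individual coordinate already sweeps out the \emph{entire} subgroup $\langle w_i\rangle\pmod d$, so no further freedom is needed. The assumption $\gcd(w_1,\dots,w_k)=1$ is used in precisely this step and is essential: without it, only residues in a proper subgroup of $\mathbb Z/d\mathbb Z$ would be achievable, and the count of admissible last tuples would drop accordingly.
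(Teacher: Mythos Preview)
Your proposal is correct and follows essentially the same route as the paper: first use $\gcd(w_1,\dots,w_k)=1$ to argue that any choice of last $(n-k)$ entries can be completed to an element of $\mathcal{M}$, then count the admissible tuples. Your subgroup argument makes explicit what the paper states in one line, and your direct product count is equivalent to the paper's appeal to inclusion--exclusion (which is trivial here since the forbidden conditions are independent across coordinates).
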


\begin{proof}
As $\gcd(w_1,\dots,w_k)=1$, any weighted degree $\sum_{i=k+1}^n \alpha_i w_i$
can be completed to a multiple of $d$ by some contribution of the first
$k$ entries. Of these only the ones with $\alpha_i \neq \left(\frac{d}{w_i}-1\right)$
need to be counted which after a direct application of the inclusion-exclusion
formula yields the desired expression.
\end{proof}

Combining the result of this lemma and the preceding corollary, we see that
in the case of $\gcd(w_1,\dots,w_k)=1$ the total number of strong 
$\beta$-classes is precisely the total Milnor number.
On the other hand,
explicit computation showed that for all families of Calabi--Yau 3-folds
with one perturbation considered, the degree of the zeta-function drops by 
exactly the total Milnor number, e.g. for the case of the canonical 
perturbation, this is the total number of conifold 
singularities, when passing to a singular fibre.
We will see further 
occurrences of these coincidences in explicit examples for $2$-parameter
families in the next section.

The correspondence between the findings of the singularity analysis and the
intermediate results of the calculation of the zeta-function can be shown 
to further illuminate the internal structure of the combinatorial
objects involved. As the calculations in the general case are rather
technical and might block the view for the key observation, we only state
this for the case $\beta=(1,\dots,1)$:

\begin{rem}
By using standard facts about the $\gcd$, the cardinality of the set 
${\mathcal M}$ can also be stated as
$$|\mathcal{M}| = \prod_{i=2}^n \gcd\left( \frac{d}{w_i},
                             \frac{d}{\gcd(w_1,\ldots,w_{i-1})}\right),$$
which better reflects the combinatorial structure of ${\mathcal M}$. \footnote{
Note that this decomposition into a product holds for any ordering of the 
weights.}. Consider the first two weights $w_1$ and $w_2$. The 
${\mathbf c}$-subclasses associated to each weight have lengths 
$L_1=\frac{d}{w_1}$, $L_2=\frac{d}{w_2}$ respectively. The i-th coordinates 
of the ordered monomials in every ${\mathbf c}$-class take values in the 
range $0,1,2,\ldots,\left(\frac{d}{w_i}-1\right)$ going up by $1$ cyclically. 
The greatest common divisor of these two ${\mathbf c}$-subclass lengths, 
$g_{1,2}=\gcd\left( \frac{d}{w_1},  \frac{d}{w_2}\right)$, can be used to 
divide the ranges $0,1,2,\ldots,\left(\frac{d}{w_i}-1\right)$ into $g_{1,2}$ 
disjoint partitioning sets given by:
$$S_{i_k}=\left\{k,k+g_{1,2},k+2g_{1,2},\ldots,k+\left(\frac{L_i}{g_{1,2}}-1\right)g_{1,2}\right\},\  0\leq k\leq (g_{1,2}-1).$$
We can now divide the monomials in $\mathcal{M}$ with $i$th coordinate in 
$S_{i_k}$ ($i=1,2$) into $g_{1,2}$ distinct sets. Hence we have established 
that 
$$g_{1,2}=\gcd\left( \frac{d}{w_1},  \frac{d}{w_2}\right) \mid |\mathcal{M}|,$$
thus accounting for the first factor in the formula. Iterating this process,
we next compute the  ${\mathbf c}$-subclass length associated to the pair 
of weights $(w_1, w_2)$, which we shall label 
$L_{1,2}=\frac{d}{\gcd(w_1,w_2)}$. Then we find analogously to the previous
step:
$$g_{(1,2),3}=\gcd\left( \frac{d}{w_3},  \frac{d}{\gcd(w_1,w_2)}\right),$$
which again leads to a further partitioning. Eventually, this leads to a 
sequence of refinements of the partitioning which reflects the claimed 
expression for the number of elements in ${\mathcal M}$.
\end{rem}

\section{Examples of 2-parameter families}
\label{twoparameter}

The observations for the 1-parameter families might still be a combinatorial
conincidence, but passing to 2-parameter families where the singularity 
analysis is no longer purely combinatorial we still see the same phenomena:
The total Milnor number of a singular fibre matches the change of the degree
of the zeta-function when moving from a smooth to a singular fibre. These
are precisely the observations which one would expect if Lauder's conjecture
of an analogue to the Clemens-Schmid exact sequence holds.\\

\noindent The three considered examples are:

\subsection{A family in ${\mathbb P}_{(1,1,2,2,2)}$}

Considering the family in ${\mathbb P}_{(1,1,2,2,2)}$ given by
$$F=x^8+y^8+z^4+u^4+v^4+a\cdot xyzuv + b\cdot x^4y^4,$$
the discriminant consists of two lines $L_1=V(b-2)$ and 
$L_2=V(b+2)$ (denote $L=L_1 \cup L_2$) and the curve $C$ which possesses 
the two components $C_1=V(a^4-256b+512)$ and $C_2=V(a^4-256b-512)$. 
For the singular fibres of the family the following singularity types occur:
\begin{description}
\item[$(a,b) \in L \setminus (L \cap C)$:] 
           4 singularities of type $T_{4,4,4}$ ($\mu = 11$)
\item[$(a,b) \in C \setminus (C \cap L)$:] 
           64 ordinary double points
\item[$(a,b) \in L \cap C$, $a\neq 0$:] 
           4 singularities of type $T_{4,4,4}$ ($\mu = 11$) and \\
           64 ordinary double points\\
           (transversal intersections of the components of the discriminant)
\item[$(0,b) \in L \cap C$:] 
           4 singularities with local normal form $x^2+z^4+u^4+v^4$\\ 
           ($\mu=27$)\\
           (higher order contact of the components of the discriminant)
\end{description}

When computing the zeta function, we see the following degrees of the 
contributions depending on the considered fibre of the family. The
contributions are labeled by the respective $(1,1,1,1,1)$-classes; classes
only differing by a permutation of entries are collected in one line\footnote{
We list the number of permutations in the column labeled 'Perm.'}:

\begin{center}
\begin{longtable}{|c|c|c|c|c|c|}\hline
\multicolumn{6}{|c|}{Degree of Contribution $R_{\textbf{v}}(t)$ According to Singularity} \\ \hline
Monomial $\textbf{v}$ & Perm. & Smooth & 64 $A_1$ & 4 $T_{4,4,4}$ & Both \\
 & & & with  & with & or \\
 & & & $\mu_{X,x}=1$ & $\mu_{X,x}=11$ & 4 $\mu_{X,x}=27$ \\
\hline
                      & & $\mu_X=0$ & $\mu_X=64$ & $\mu_X=44$ & $\mu_X=108$ \\
\hline\hline
(0,0,0,0,0) & 1 & 6 & 5 & 4 & 3 \\
(0,2,1,1,1) & 2 & 4 & 3 & 2 & 1 \\
(6,2,0,0,0) & 1 & 4 & 3 & 2 & 1 \\ \hline
(0,0,0,2,2) & 3 & 4 & 3 & 3 & 2 \\
(2,0,1,3,3) & 6 & 2 & 1 & 1 & 0 \\
(4,0,2,0,0) & 3 & 4 & 3 & 3 & 2 \\ \hline
(0,0,2,1,1) & 3 & 3 & 2 & 2 & 1 \\
(6,0,1,0,0) & 6 & 3 & 2 & 2 & 1 \\
(0,4,0,3,3) & 3 & 4 & 3 & 3 & 2 \\
(4,0,1,1,0) & 3 & 4 & 3 & 3 & 2 \\
(2,0,3,0,0) & 6 & 3 & 2 & 2 & 1 \\
(2,2,1,1,0) & 3 & 3 & 2 & 2 & 1 \\\hline
(0,0,3,1,0) & 6 & 2 & 1 & 2 & 1 \\
(2,0,2,1,0) & 12& 2 & 1 & 2 & 1 \\
(4,0,2,3,1) & 6 & 0 &-1 & 0 &-1 \\ 
\hline\hline
degree:     &   &168&104&124& 60\\ 
\hline
degree change:&   &   & 64& 44&108\\\hline
\end{longtable}
\end{center}
The coincidence of the total Milnor number with the total drop in degree
as evident in this table, provides experimental evidence for
Lauder's conjecture.\\

For this first example of an particular family, we also provide the explicit zeta-function in 
one case, to justify the omission of this data in the later examples. 
Zeta function data is too richly detailed for the chosen focus of the article.
For $p=7$ and a fibre of the family with 4 $T_{4,4,4}$ singularities, the
zeta-function of our family has the following contributions:

\begin{longtable}{|c|c|c|}\hline
Monomial $\textbf{v}$ & Contribution & Power $\lambda_{\textbf{v}}$ \\
\hline\hline
(0,0,0,0,0) & $(1+18t+2.41pt^2+18p^3t^3+p^6t^4)$ & 1 \\
(0,2,1,1,1) & $(1-pt)(1+pt)$ & 2  \\
(6,2,0,0,0) & $(1-2pt+p^3t^2)$ & 1  \\ \hline
(0,0,0,2,2) & $(1+pt)(1+2pt+p^3t^2)$ & 3  \\
(2,0,1,3,3) & $[(1-pt)(1+pt)]^{\frac{1}{2}}$ & 6 \\
(4,0,2,0,0) & $(1+pt)(1+2pt+p^3t^2)$ & 3 \\ \hline
(0,0,2,1,1) & $[(1+p^3t^2)(1-pt)(1+pt)]^{\frac{1}{2}}$ & 3  \\
(6,0,1,0,0) & $[(1-2pt+p^3t^2)(1+2pt+p^3t^2)]^{\frac{1}{2}}$ & 6  \\
(0,4,0,3,3) & $[(1+p^3t^2)^2(1-pt)(1+pt)]^{\frac{1}{2}}$ & 3  \\
(4,0,1,1,0) & $[(1+p^3t^2)^2(1-pt)(1+pt)]^{\frac{1}{2}}$ & 3  \\
(2,0,3,0,0) & $[(1-2pt+p^3t^2)(1+2pt+p^3t^2)]^{\frac{1}{2}}$& 6  \\
(2,2,1,1,0) & $[(1+p^3t^2)(1-pt)(1+pt)]^{\frac{1}{2}}$ & 3 \\\hline
(0,0,3,1,0) & $(1-pt)(1+pt)$ & 6  \\
(2,0,2,1,0) & $[(1-2pt+p^3t^2)(1+2pt+p^3t^2)]^{\frac{1}{2}}$ & 12  \\
(4,0,2,3,1) & $1$ & 6  \\ \hline
\end{longtable}

Note that the second roots arise from the algorithmic computation of the
zeta function, but never occur in the final result, because the corresponding
contributions always arise in pairs.
 
\subsection{A family in ${\mathbb P}_{(1,1,2,2,6)}$}

Considering the family in ${\mathbb P}_{(1,1,2,2,6)}$ given by
$$F=x^{12}+y^{12}+z^6+u^6+v^2+a\cdot xyzuv + b\cdot x^6y^6,$$
the discriminant consists of two lines $L_1=V(b-2)$ and $L_2=V(b+2)$ (denote 
$L=L_1 \cup L_2$) and the curve $C$ which possesses 
the two components $C_1=V(a^6-1728b+3456)$ and $C_2=V(a^6-1728b-3456)$. 
For the singular fibres of the family the following singularity types occur:
\begin{description}
\item[$(a,b) \in L \setminus (L \cap C)$:] 
           6 singularities of type $T_{2,6,6}=Y^1_{2,2}$ ($\mu = 13$)
\item[$(a,b) \in C \setminus (C \cap L)$:] 
           72 ordinary double points
\item[$(a,b) \in L \cap C$, $a\neq 0$:] 
           6 singularities of type $T_{2,6,6}$ ($\mu = 13$) and \\
           72 ordinary double points\\
           (transversal intersections of the components of the discriminant)
\item[$(0,b) \in L \cap C$:] 
           6 singularities with local normal form $x^2+z^6+u^6+v^2$ \\
           ($\mu=25$)\\
           (higher order contact of the components of the discriminant)
\end{description}

Here the contributions to the factors of the zeta-function are the following:\\

\begin{longtable}{|c|c|c|c|c|c|}\hline
\multicolumn{6}{|c|}{Degree of Contribution $R_{\textbf{v}}(t)$ According to Singularity} \\ \hline
Monomial $\textbf{v}$ & Perm. & Smooth & 72 $A_1$ & 6 $T_{2,6,6}$ & Both \\
 & & & with  & with & or \\
 & & & $\mu_{X,x}=1$ & $\mu_{X,x}=13$ & 6 $\mu_{X,x}=25$ \\
\hline
                      & & $\mu_X=0$ & $\mu_X=72$ & $\mu_X=78$ & $\mu_X=150$ \\
\hline\hline
(0,0,0,0,0)  & 1 & 6 & 5 & 4 & 3\\
(11,1,0,0,0) & 2 & 4 & 3 & 2 & 1\\
(10,2,0,0,0) & 2 & 6 & 5 & 4 & 3\\
(9,3,0,0,0)  & 1 & 4 & 3 & 2 & 1\\\hline
(10,0,0,1,0) & 4 & 4 & 3 & 3 & 2\\
(9,1,0,1,0)  & 4 & 3 & 2 & 2 & 1\\
(8,2,0,1,0)  & 2 & 4 & 3 & 3 & 2\\
(5,5,0,1,0)  & 2 & 3 & 2 & 2 & 1\\\hline
(8,0,2,0,0)  & 4 & 6 & 5 & 4 & 3\\
(7,1,2,0,0)  & 2 & 4 & 3 & 2 & 1\\
(5,3,2,0,0)  & 4 & 4 & 3 & 2 & 1\\
(4,4,2,0,0)  & 2 & 6 & 5 & 4 & 3\\\hline
(6,0,3,0,0)  & 2 & 4 & 3 & 3 & 2\\
(5,1,3,0,0)  & 4 & 2 & 1 & 1 & 0\\
(4,2,3,0,0)  & 4 & 4 & 3 & 3 & 2\\
(3,3,3,0,0)  & 2 & 4 & 3 & 3 & 2\\\hline
(6,0,0,0,1)  & 1 & 4 & 3 & 3 & 2\\
(5,1,0,0,1)  & 2 & 4 & 3 & 3 & 2\\
(4,2,0,0,1)  & 2 & 2 & 1 & 1 & 0\\
(3,3,0,0,1)  & 1 & 4 & 3 & 3 & 2\\\hline
(2,2,0,1,1)  & 2 & 3 & 2 & 2 & 1\\
(3,1,0,1,1)  & 4 & 4 & 3 & 3 & 2\\
(10,6,0,1,1) & 4 & 3 & 2 & 2 & 1\\
(11,5,0,1,1) & 2 & 4 & 3 & 3 & 2\\\hline
(1,1,2,0,1)  & 2 & 2 & 1 & 2 & 1\\
(2,0,2,0,1)  & 4 & 2 & 1 & 2 & 1\\
(9,5,2,0,1)  & 4 & 2 & 1 & 2 & 1\\
(10,4,2,0,1) & 2 & 0 &-1 & 0 &-1\\\hline\hline
degree:      &   &254&182&176&104\\\hline
degree change:&  &   & 72& 78&150\\\hline
\end{longtable}

\subsection{A family in ${\mathbb P}_{(1,1,3,3,4)}$}

Considering the family in ${\mathbb P}_{(1,1,3,3,4)}$ given by
$$F=x^{12}+y^{12}+z^4+u^4+v^3+a\cdot xyzuv + b\cdot x^4y^4v,$$
the discriminant consists of three lines $L=V(b^3+27)$ and the curve 
$C=V(a^{12}-a^8b^4-576a^8b+512a^4b^5
           +96768a^4b^2-65536b^6-3538944b^3-47775744)$. 
For the singular fibres of the family the following singularity types occur:
\begin{description}
\item[$(a,b) \in L \setminus (L \cap C)$:]
           12 ordinary double points
\item[$(a,b) \in C \setminus ((L\cap C) \cup C_{sing})$:]
           48 ordinary double points
\item[$(0,b) \in L \cap C$:]
           12 singularities of type $X_9$\\
           (higher order contact of components of the dicriminant)
\item[$(a,b) \in L\cap C, a\neq 0$:]
           60 ordinary double points\\
           (transversal intersections of the components of the discriminant)
\item[$(a,b) \in V(9a^4-16b^4,b^3-108) \subset C_{sing}$:]
           48 $A_2$ singularities
\item[$(a,b) \in V(a^4-288b,b^3-216) \subset C_{sing}$:]
           96 $A_1$ singularities
\end{description}

\begin{center}
\begin{longtable}{|c|c|c|c|c|c|c|c|c|}\hline
\multicolumn{8}{|c|}{Degree of Contribution $R_{\textbf{v}}(t)$ According to Singularity} \\\hline
Monomial $\textbf{v}$ & Perm. & Smooth & $12$ $A_1$&$48$ $A_1$&$60$ $A_1$&$48$ $A_2$&$12$ $X_9$\\\hline\hline
(0,0,0,0,0)  & 1 & 6 & 5 & 5 & 4 & 4 & 3\\
(11,1,0,0,0) & 2 & 4 & 3 & 3 & 2 & 2 & 1\\
(10,2,0,0,0) & 2 & 4 & 3 & 3 & 2 & 2 & 1\\
(9,3,0,0,0)  & 2 & 6 & 5 & 5 & 4 & 4 & 3\\
(8,4,0,0,0)  & 2 & 6 & 5 & 5 & 4 & 4 & 3\\ 
(7,5,0,0,0)  & 2 & 4 & 3 & 3 & 2 & 2 & 1\\
(6,6,0,0,0)  & 1 & 6 & 5 & 5 & 4 & 4 & 3\\\hline
(9,0,1,0,0)  & 4 & 4 & 4 & 3 & 3 & 2 & 2\\
(8,1,1,0,0)  & 4 & 4 & 4 & 3 & 3 & 2 & 2\\ 
(7,2,1,0,0)  & 4 & 2 & 2 & 1 & 1 & 0 & 0\\ 
(6,3,1,0,0)  & 4 & 4 & 4 & 3 & 3 & 2 & 2\\ 
(5,4,1,0,0)  & 4 & 4 & 4 & 3 & 3 & 2 & 2\\
(11,10,1,0,0)& 4 & 2 & 2 & 1 & 1 & 0 & 0\\  \hline 
(6,0,2,0,0)  & 2 & 4 & 4 & 3 & 3 & 2 & 2\\
(5,1,2,0,0)  & 4 & 2 & 2 & 1 & 1 & 0 & 0\\
(4,2,2,0,0)  & 4 & 4 & 4 & 3 & 3 & 2 & 2\\
(3,3,2,0,0)  & 2 & 4 & 4 & 3 & 3 & 2 & 2\\\hline\hline 
degree:      &   &180&168&132&120& 84&72\\\hline
degree change:&  &   & 12& 48& 60& 96&108\\\hline
\end{longtable}
\end{center}

\section{Conclusion}

For one-parameter families it has been shown that the combinatorics of the 
monomial equivalence classes, which split up the zeta function, is intimately 
related to the singularity structure of the varieties. Moreover, all computed 
examples\footnote{In addition to the examples stated in this article, all 
Calabi--Yau 3-folds of Fermat-type have been systematically studied 
combinatorially from our point of view. For a number of interesting cases, 
which did not pose too many computational difficulties for the Mathematica programs, 
 the explicit zeta-functions have been determined for low primes -- all showing the same 
behaviour. We choose to include only 3 explicit examples of 2-parameter
families which already cover most of our observations, because adding further
examples would not show new phenomena.} have also shown that the change of
degree of the contribution by each labeled part of the zeta function follows
patterns of the set of strong $\beta$-classes. The total change of degree
of the zeta function upon passing to a singular fibre has been observed
as coinciding with the total Milnor number of the singular fibre.

For the more involved case of two-parameter families, it is also apparent 
from the finite number of cases computed, that the combinatorics of the 
strong equivalence classes once again seem to be reflected in the 
singularity structure.  From this arises the following conjecture, which 
strongly refines the conjectures of \cite{K, Ka06}:

\begin{conj}[Singularity -geometric/combinatoric duality]
Given a family of Calabi--Yau varieties with special fibre of Fermat type,
the total Milnor number of each arising singular fibre is expressible in 
terms of the change of the degree of the zeta function when passing to
the singular fibre. \\
The singularity structure as reflected in the relative Milnor (and Tjurina)
algebra of the family encodes information on the degree changes of factors 
of the zeta function labeled by $\beta$-classes.
\end{conj}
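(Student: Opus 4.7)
The plan is to attempt the conjecture in two stages, first establishing the total degree drop assertion and then refining it to the $\beta$-class level. For the total statement, I would try to deduce it from (a suitable version of) Lauder's conjectured Clemens--Schmid analogue in rigid cohomology, which is the natural bridge the authors themselves point to. In characteristic zero, the classical Clemens--Schmid exact sequence identifies the space of vanishing cycles on a degeneration with a precise difference between the cohomology of the smooth nearby fibre and a ``limit'' piece attached to the singular fibre, and for isolated hypersurface singularities the vanishing cycle dimensions are exactly the local Milnor numbers, summed over the singular points. So the first step is to pass to a $p$-adic/rigid incarnation of this statement, identifying the $\ell$-adic or $p$-adic vanishing cycle spaces at each singular point of our Calabi--Yau fibre, and summing them to give $\sum_{x \in \operatorname{Sing}(X_0)} \mu_{X,x}$.

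The second step is to match this sum of local Milnor numbers to the numerical change in $\deg\zeta$. Because the Weil zeta function is a rational function whose degrees count Frobenius eigenvalues on the relevant cohomology, and because the smooth/singular comparison in step one expresses these Frobenius eigenvalues of the singular fibre as those of the nearby fibre minus a vanishing-cycle contribution of dimension equal to the total Milnor number, the global equality $\Delta(\deg \zeta) = \mu_{X}$ would follow directly. Here I would use the Calabi--Yau hypothesis and the explicit Fermat-type structure to rule out cancellation between contributions of opposite parity: the vanishing cycles live in middle dimension, so they appear with a definite sign in the alternating sum that computes $\deg\zeta$.

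For the finer $\beta$-class refinement, I would exploit the group action that cuts $\mathcal{M}$ into strong $\beta$-classes: the diagonal torus of $d$-th roots of unity acting on $\mathbb{P}_{w_1,\dots,w_n}$ preserves both the family and the perturbation monomial $\underline{x}^\beta$, so its character decomposition induces a splitting of the whole Frobenius action that is compatible with the decomposition $\zeta = \zeta_{\mathbf{const}}\prod_{\beta}\zeta_{\mathcal{C}_\beta}$. The relative Tjurina/Milnor algebra computed in Section~\ref{SingAnalysis} carries the same torus action, and the Gr\"obner basis calculations in Lemma~\ref{lem2} already exhibit its graded pieces in terms of monomials avoiding the entries $\frac{d}{w_i}-1$, matching the combinatorial criterion of Proposition~\ref{strong} for computing $\deg \zeta_{\mathcal{C}_\beta}$. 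The plan is therefore to upgrade the numerical equality of step two to an equivariant equality of characters of this torus, and then to read off each $\beta$-class contribution separately.

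The hard part will be step one: rigorously establishing enough of Lauder's conjectured positive-characteristic Clemens--Schmid sequence for these Calabi--Yau families with non-ADE singularities (the $T_{4,4,4}$, $Y_{2,2}^1$, $X_9$ cases appearing in Section~\ref{twoparameter}) and with non-isolated singularities occurring on intersections of discriminant components. A plausible fallback strategy, avoiding Lauder's conjecture entirely, would be to use Kontsevich--Denef--Loeser motivic nearby fibres on an explicit semistable model obtained by blowing up along the components of the discriminant, and compare the motivic Milnor fibre directly with the Dwork-character Gauss sum decomposition of Section~2; but even this is likely to require one to control resolution, which the authors were explicitly trying to avoid, and it is this tension between the toric/combinatorial tractability of the Dwork approach and the cohomological input needed to access Milnor numbers globally that I expect to be the real obstacle.
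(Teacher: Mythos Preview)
The statement you are attempting to prove is a \emph{conjecture}: the paper does not prove it and does not claim to. The authors are explicit about this in Section~2, writing that they ``intentionally only provide phenomenological (and for $2$-parameter families also experimental) data, but no theoretical explanation for the observed correspondences,'' and the conjecture itself is introduced in the Conclusion as a distillation of the evidence gathered in Sections~3--5. So there is no ``paper's own proof'' to compare against. What the paper offers is (i) the combinatorial identities of Section~4 showing that, for $1$-parameter Fermat-type families, the number and Milnor numbers of singularities computed in Lemma~\ref{lem2} match the count of strong $\beta$-classes and the count of admissible tails; and (ii) the computed zeta-function tables for three explicit $2$-parameter families, in which the degree drops are observed to equal the total Milnor numbers. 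None of this constitutes a proof of the conjecture in general.

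Your proposal is a reasonable outline of how one might \emph{try} to prove the conjecture, and you correctly identify the key obstruction: the whole argument is conditional on Lauder's conjectural $p$-adic Clemens--Schmid sequence, which is exactly what the paper flags as open. So your ``proof'' is really a reduction to another open problem, and you say as much yourself in the final paragraph. Two smaller points: first, your equivariant refinement via the diagonal torus of $d$-th roots is the right idea and is implicit in the paper's use of Dwork characters, but the paper does not carry it out as a proof either; second, the singularities at intersections of discriminant components in Section~\ref{twoparameter} are still isolated (several types occurring simultaneously in one fibre, or a single more degenerate isolated type at tangential intersections), not non-isolated, so that particular worry is misplaced for the examples at hand. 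In short: there is nothing wrong with your plan as a research strategy, but you should be clear that neither you nor the paper has a proof.
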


The degenerative properties of the zeta functions at singular points studied 
here (and the global L-series they give rise to) were recently exploited in 
\cite{KLS} in order to investigate the phenomenon of `string modularity'. 
The main result was that for several families (all containing a Fermat member 
as a special fibre), the modular form associated to part of the global 
zeta function or L-series found at a degenerate, non-Fermat point 
in the moduli space agreed with that of the motivic L-series of a 
different weighted Fermat variety. 
These pairs are called L-correlated and provide evidence that the 
conformal field theory at deformed fibres (currently difficult to define) 
are related to those of the well-defined rational conformal field theories 
of Fermat-type manifolds (Gepner models) with a completely different geometry.
Our singularity-theoretic and combinatorial results would the aid 
exploration of both finding more examples of singular members of Calabi--Yau 
families exhibiting modularity, and perhaps more L-correlated 
`string-modular' pairs.


\begin{thebibliography}{99}
\bibitem[ACJM]{ACJM} A.C. Avram, P. Candelas, D. Jancic, M. Mandelberg, 
                     \textsl{On the connectedness of the moduli space of 
                     Calabi--Yau manifolds }, Nuclear Physics B
                     Volume 465, Issue 3, 22 April 1996, Pages 458-472.
\bibitem[AGZV]{Arnold} V.I. Arnold, S.M. Gusein-Zade, A.N. Varchenko, 
                     \textsl{Singularities of Differentiable Maps, Volume I}, 
                     Monographs in Mathematics, Birkh\"auser.
\bibitem[Bat]{Bat} V. Batyrev, \textsl{Dual Polyhedra and the Mirror 
                     Symmetry for Calabi--Yau Hypersurfaces in Toric 
                     Varieties} Duke math J.69, No. 2, (1993) 349.
\bibitem[CdO]{CdO} P. Candelas and X. de la Ossa, \textsl{The zeta-function 
                     of a p−adic manifolds, Dwork theory for physicists}, 
                     arXiv: hep-th/0705.2056
\bibitem[CdOV1]{CdOV1}P.Candelas, X. de la Ossa, F.Rodriguez Villegas, 
                     \textsl{Calabi--Yau Manifolds over Finite Fields I}, 
                     arXiv:hep-th/0012233.
\bibitem[CdOV2]{CdOV2}P.Candelas, X. de la Ossa, and F.Rodriguez Villegas, 
                     \textsl{Calabi--Yau Manifolds over Finite Fields II}, 
                     Fields Institute Communications Volume \textbf{38},(2003).
\bibitem[CK]{CK} D. Cox and S. Katz, \textsl{Mirror Symmetry and Algebraic 
                     Geometry}, Math. Surveys and Monographs, \textbf{68}, 
                     Amer. Math. Soc., 1999.
\bibitem[DGPS]{Sing} Decker, W.; Greuel, G.-M.; Pfister, G.; 
                     Sch{\"o}nemann, H.: \newblock {\sc Singular} {3-1-2} --- 
                     {A} computer algebra system for polynomial computations.
                     \newblock {http://www.singular.uni-kl.de} (2010).
\bibitem[Del1]{Del} P. Deligne, \textsl{La Conjecture de Weil I}, 
                     Publ.Math.IHES \textbf{43} (1974) 273-307.
\bibitem[Del2]{Del} P. Deligne, \textsl{La Conjecture de Weil II}, 
                     Publ.Math.IHES \textbf{52} (1980) 137-252.
\bibitem[Dw1]{Dwork1}B.M.Dwork, \textsl{On the rationality of the Zeta 
                     Function of an Algebraic Variety}, Amer.J.Math. 
                     \textbf{82} (1960) 631.
\bibitem[Dw2]{Dwork2}B. Dwork, \textsl{On the zeta-function of a hypersurface},
                     Publ. Math. I.H.\'{E}.S., 12 pp 5–68, 1962.
\bibitem[GPS09]{GPS09} G.-M.~Greuel, G.~Pfister, and H.~Sch{\"o}nemann.
                     {\sc Singular} {3-1-1} --- \textsl{A computer algebra 
                     system for polynomial computations.}
                     http://www.singular.uni-kl.de (2010).
\bibitem[Gr]{Gr} A. Grothendieck, \textsl{Formul\'{e} de Lefschetz \'{e}t 
                     rationalit\'{e} de fontion de L}, S\'{e}minaire Bourbaki 
                     \textbf{279}, 1964/1965, 1-15.
\bibitem[GH]{GH} P. Green and T. H{\"u}bsch, \textsl{Connecting moduli spaces 
                     of Calabi--Yau threefolds}, Commun. Math. Phys. B298 
                     (1988) 493--525.
\bibitem[GK]{GK} B. H. Gross and N. Koblitz, \textsl{Gauss Sums and $p$-adic 
                     $\Gamma$-function}, Annals of Math. 109 569 (1979).
\bibitem[GMS]{GMS} B.R. Greene, D. Morrison and A. Strominger, \textsl{Black 
                     hole condensation and the unification of string vacua}, 
                     Nucl. Phys. B451 (1995) 109--120, arXiv: hep-th/9504145.
\bibitem[GY95]{GY} F. Gouv\^ea and N. Yui, {\it Arithmetic of Diagonal 
                     Hypersurfaces over Finite Fields}, London Math. Soc. 
                     Lecture Notes Series {\bf 209},
                     Cambridge University Press 1995.
\bibitem[HKS]{HKS} K. Hulek, R. Kloosterman, M. Schuett, \textsl{Modularity 
                     of Calabi--Yau Manifolds},  Global aspects of complex 
                     geometry, pp. 271--309, Springer, Berlin, 2006.
\bibitem[K04]{K} S. N. Kadir, \textsl{The Arithmetic of Calabi--Yau Manifolds 
                     and Mirror Symmetry}, University of Oxford, 2004, 
                     arXiv: hep-th/0409202. 
\bibitem[K06]{Ka06} S. Kadir, \textsl{Arithmetic mirror symmetry for a 
                     two-parameter family of Calabi--Yau manifolds.}  Mirror 
                     symmetry. V,  35--86, AMS/IP Stud. Adv. Math., 38, 
                     Amer. Math. Soc., Providence, RI, 2006.
\bibitem[KY]{KY} S. Kadir and N. Yui, \textsl{Motives and Mirror Symmetry for 
                     Calabi--Yau Orbifolds} , Fields Institute Communications 
                     Volume \textbf{54},(2008).
\bibitem[KLS]{KLS} S. Kadir, M. Lynker and R. Schimmrigk, 
                     \textsl{String Modular Phases in Calabi--Yau Families}, 
                     arXiv: 1012.5807 (hep-th).
\bibitem[Kl]{Kl} R. Kloosterman, \textsl{The zeta function of monomial 
                     deformations of Fermat hypersurfaces},  Algebra Number 
                     Theory  1  (2007),  no. 4, 421--450.  
\bibitem[L1]{L} A. Lauder, \textsl{Counting solutions to equations in many 
                     variables over ﬁnite ﬁelds}, 
                     Foundations of Computational Mathematics 4 No. 3 
                     pp221-267 2004. 
\bibitem[L2]{L2} A. Lauder, \textsl{Degenerations and limit Frobenius 
                     structures in rigid cohomology}, to appear in 
                     LMS J. Comp. Math.. arXiv: math.NT0912.5185.
\bibitem[Mth]{Mth} Wolfram Research, Inc., Mathematica, Version 5.1, 
                     Champaign, IL (2004).
\bibitem[S]{S} A. Strominger, \textsl{Massless black holes and conifolds in 
                     string theory},
                     Nucl. Phys. B451 (1995) 96--108.
\bibitem[R]{R} M. Reid, \textsl{The moduli space of $3$-folds with $K=0$ may 
                     nevertheless be irreducible}, 
                     Math. Ann. 278 (1987) 329-334.
\end{thebibliography}
\end{document}